\let\pa\partial
\let\na\nabla
\let\eps\varepsilon
\newcommand{\diver}{\operatorname{div}}
\newcommand{\R}{{\mathbb R}}
\newcommand{\N}{{\mathbb N}}
\newcommand{\D}{{\mathcal D}}
\newcommand{\E}{{\mathcal E}}
\newcommand{\F}{{\mathcal F}}
\renewcommand{\S}{{\mathcal S}}
\newcommand{\T}{{\mathcal T}}
\newcommand{\V}{{\mathcal V}}
\newcommand{\dist}{{\mathrm{d}}}
\newtheorem{theorem}{Theorem}
\newtheorem{lemma}[theorem]{Lemma}
\newtheorem{remark}{Remark}
\begin{document}

\title[Comparison of a finite-element and finite-volume scheme]{Comparison of a 
finite-element and finite-volume scheme for a degenerate cross-diffusion system
for ion transport}

\author[A. Gerstenmayer]{Anita Gerstenmayer}
\address{Institute for Analysis and Scientific Computing, Vienna University of
	Technology, Wiedner Hauptstra\ss e 8--10, 1040 Wien, Austria}
\email{anita.gerstenmayer@tuwien.ac.at}

\author[A. J\"ungel]{Ansgar J\"ungel}
\address{Institute for Analysis and Scientific Computing, Vienna University of
	Technology, Wiedner Hauptstra\ss e 8--10, 1040 Wien, Austria}
\email{juengel@tuwien.ac.at}

\thanks{The authors acknowledge partial support from   
the Austrian Science Fund (FWF), grants F65, P27352, P30000, and W1245} 

\date{\today}

\begin{abstract}
A structure-preserving implicit Euler finite-element scheme for a degenerate
cross-diffusion system for ion transport is analyzed. The scheme preserves
the nonnegativity and upper bounds of the ion concentrations, the total
relative mass, and it dissipates the entropy (or free energy).
The existence of discrete solutions to the scheme and their convergence towards
a solution to the continuous system is proved. 
Numerical simulations of two-dimensional ion channels using the finite-element scheme 
with linear elements and an alternative finite-volume scheme are presented.
The advantages and drawbacks of both schemes are discussed in detail.
\end{abstract}

\keywords{Ion transport, finite-element method, entropy method,
existence of discrete solutions, convergence of the scheme, calcium-selective
ion channel, bipolar ion channel.}

\subjclass[2000]{65M08, 65L60, 65M12, 35K51, 35K65, 35Q92.}

\maketitle


\section{Introduction}

Ion channels are pore-forming proteins that create a pathway for charged ions 
to pass through the cell membrane. They are of great biological importance
since they contribute to processes in the nervous system, the coordination
of muscle contraction, and the regulation of secretion of hormones, for instance.
Ion-channel models range from simple systems of differential equations
\cite{HoHu52} as well as Brownian and Langevin dynamics \cite{ISR00,NSS05} to
the widely used Poisson--Nernst--Planck model \cite{Eis98}. The latter model
fails in narrow channels since it neglects the
finite size of the ions. Finite-size interactions can be approximately captured
by adding suitable chemical potential terms \cite{GXWM05,NCE00}, for instance. 
In this paper,
we follow another approach. Starting from a random walk on a lattice, one can
derive in the diffusion limit an extended Poisson--Nernst--Planck model, 
taking into account that ion concentrations might saturate in the narrow channel.
This leads to the appearance of cross-diffusion terms in the evolution equations
for the ion concentrations \cite{BDPS10,SLH09}. These
nonlinear cross-diffusion terms are common in diffusive multicomponent 
systems \cite[Chapter 4]{Jue16}. A lattice-free approach, starting from stochastic
Langevin equations, can be found in \cite{BrCh14}.
The scope of this paper is to present a new finite-element discretization of the
degenerate cross-diffusion system and to compare this scheme to a previously 
proposed finite-volume method \cite{CCGJ18}. 

The dynamics of the ion concentrations $u=(u_1,\ldots,u_n)$ is governed by
the evolution equations
\begin{equation}\label{eq:ion_trans}
  \pa_t u_i + \diver \F_i = 0, \quad \F_i = -D_i\big(u_0\na u_i - u_i\na u_0 
  + u_0u_i\beta z_i\na\Phi\big)\quad\mbox{in }\Omega,\ t>0,
\end{equation}
where $u_{0}=1-\sum_{i=1}^{n}u_i$ denotes the solvent concentration, 
$D_i>0$ is the diffusion constant, $z_i$ the ion charge, and $\beta$ a mobility 
parameter. To be precise, $u_i$ is the mass fraction of the $i$th ion, 
and we refer to $\sum_{i=0}^n u_i=1$ as the total relative mass, just meaning that
the ion-solvent mixture is saturated.
The electric potential $\Phi$ is self-consistently given by 
the Poisson equation
\begin{equation}\label{eq:Phi}
  -\lambda^2\Delta\Phi = \sum_{i=1}^nz_iu_i + f \quad\mbox{in }\Omega,
\end{equation}
with the permanent charge density $f=f(x)$ and the scaled permittivity constant 
$\lambda^2$. The equations are solved in a bounded domain $\Omega\subset\R^d$
with smooth boundary $\pa\Omega$. Equations \eqref{eq:ion_trans} are equipped 
with initial data $u(0)=u^I$ satisfying $0<\sum_{i=1}^n u_i^I<1$. 
The boundary $\pa\Omega$ consists of an insulating part $\Gamma_N$ and the union
$\Gamma_D$ of contacts with external reservoirs:
\begin{align}
  \F_i\cdot\nu=0\quad\mbox{on }\Gamma_N, 
	&\quad u_i=\overline{u}_i\quad\mbox{on }\Gamma_D, \quad i=1,\ldots,n, \label{1.bc1} \\
  \na\Phi\cdot\nu=0\quad\mbox{on }\Gamma_N, 
	&\quad \Phi=\overline{\Phi}\quad\mbox{on }\Gamma_D. \label{1.bc2}
\end{align}

System \eqref{eq:ion_trans}-\eqref{eq:Phi} can be interpreted as a generalized
Poisson--Nernst--Planck model. The usual Poisson--Nernst--Planck equations
\cite{Eis98} follow from \eqref{eq:ion_trans} by setting $u_0=\mbox{const}$. In the
literature, there are several generalized versions of the standard model.
For instance, adding a term involving
the relative velocity differences in the entropy production leads to
cross-diffusion expressions different from \eqref{eq:ion_trans} \cite{HHLLL15}.
This model, however, does not take into account effects from the finite ion size.
Thermodynamically consistent Nernst--Planck models with cross-diffusion terms 
were suggested in \cite{DGM13}, but the coefficients differ from \eqref{eq:ion_trans}.
The model at hand was derived in \cite{BDPS10,SLH09} from a lattice model
taking into account finite-size effects.

Model \eqref{eq:ion_trans}-\eqref{1.bc2} contains some mathematical difficulties.
First, its diffusion matrix $A(u)=(A_{ij}(u))\in\R^{n\times n}$, given by
$A_{ij}(u)=D_iu_i$ for $i\neq j$ and $A_{ii}(u)=D_i(u_0+u_i)$ for $i=1,\ldots,n$
is generally neither symmetric nor positive definite. Second, it degenerates 
in regions where the concentrations vanish. Third, the standard maximum principle
cannot be applied to achieve $0\le u_i\le 1$ for $i=1,\ldots,n$. In the following,
we explain how these issues can be solved.

The first difficulty can be overcome by introducing so-called entropy variables
$w_i$ defined from the entropy (or, more precisely, free energy) of the system,
\begin{equation}\label{1.h}
  H(u) = \int_\Omega h(u)dx, \quad\mbox{where }
  h(u) = \sum_{i=0}^n\int_{\overline{u}_i}^{u_i}\log\frac{s}{\overline{u}_i}ds
  + \frac{\beta\lambda^2}{2}|\na(\Phi-\overline{\Phi})|^2.
\end{equation}
Indeed, writing equations \eqref{eq:ion_trans} in terms of the entropy variables 
$w_1,\ldots,w_n$, given by
\begin{align}
  & \frac{\pa h}{\pa u_i} = w_i - \overline{w}_i, \quad\mbox{where} \label{1.w_i} \\
  & w_i = \log\frac{u_i}{u_0} + \beta z_i\Phi, \quad 
  \overline{w}_i = \log\frac{\overline{u}_i}{\overline{u}_0} + \beta z_i\overline{\Phi},
  \quad i=1,\ldots,n, \nonumber
\end{align}
it follows that
\begin{equation}\label{1.eqw}
  \pa_t u_i(w,\Phi) = \diver\bigg(\sum_{j=1}^n B_{ij}(w,\Phi)\na w_j\bigg),
\end{equation}
where the new diffusion matrix $B=(B_{ij}(w,\Phi))\in\R^{n\times n}$ 
with 
$$
  B_{ij}(w,\Phi) = D_iu_0(w,\Phi)u_i(w,\Phi)\delta_{ij}, \quad i,j=1,\ldots,n,
$$
is symmetric and positive semidefinite (in fact, it is even diagonal). 
This procedure has a thermodynamical background: The quantities $\pa h/\pa u_i$
are known as the chemical potentials, and $B$ is the so-called mobility or
Onsager matrix \cite{DeMa84}.

The transformation to entropy variables also solves the third difficulty.
Solving the transformed system \eqref{1.eqw} for $w=(w_1,\ldots,w_n)$, the
concentrations are given by
\begin{equation}\label{1.uinv}
  u_i(w,\Phi) = \frac{\exp(w_i-\beta z_i\Phi)}{1+\sum_{j=1}^n
	\exp(w_j-\beta z_j\Phi)}, \quad i=1,\ldots,n,
\end{equation}
showing that $u_i$ is positive and bounded from above:
\begin{equation}\label{1.D}
  u(w,\Phi)\in\mathcal{D} := \bigg\{u\in(0,1)^n:\sum_{i=1}^n u_i < 1\bigg\}.
\end{equation}
Moreover, the entropy structure leads to gradient estimates via the entropy inequality
\begin{equation*}
  \frac{dH}{dt} + \frac12\int_\Omega \sum_{i=1}^n D_iu_0u_i|\na w_i|^2 dx \le C,
\end{equation*}
where the constant $C>0$ depends on the Dirichlet boundary data.

Still, we have to deal with the second difficulty, the degeneracy. 
It is reflected in the entropy inequality since we lose the gradient
estimate if $u_i=0$ or $u_0=0$. This problem is overcome by using the ``degenerate''
Aubin-Lions lemma of \cite[Appendix C]{Jue15} or its discrete version in
\cite[Lemma 10]{CCGJ18}. 

These ideas were employed in \cite{BDPS10} for $n=2$ ion species and without electric 
potential to show the global existence of weak solutions. The existence result 
was extended to an arbitrary number of species in \cite{Jue15,ZaJu17}, still
excluding the potential. A global existence result for the full problem
\eqref{eq:ion_trans}-\eqref{1.bc2} was established in \cite{GeJu18}.

We are interested in devising a numerical scheme which preserves
the structure of the continuous system, like nonnegativity, upper bounds,
and the entropy structure, on the discrete level. A first result in this direction
was presented in \cite{CCGJ18}, analyzing a finite-volume scheme preserving
the aforementioned properties. However, the scheme preserves the nonnegativity
and upper bounds only if the diffusion coefficients $D_i$ are all equal, and
the discrete entropy is dissipated only if additionally the potential term
vanishes. In this paper, we propose a finite-element scheme for which the
structure preservation holds under natural conditions. 

Before we proceed, we briefly discuss some related literature. While there are many 
results for the classical Poisson--Nernst--Planck system, see for example 
\cite{LHMZ10,PrSc09}, there seems to be no numerical analysis of the ion-transport 
model \eqref{eq:ion_trans}-\eqref{1.bc2} apart from the finite-volume
scheme in \cite{CCGJ18} and simulations of the stationary equations in \cite{BSW12}. 
Let us mention some other works on 
finite-element methods for related cross-diffusion models. In \cite{BB04}, 
a convergent finite-element scheme for a cross-diffusion population model was 
presented. The approximation is not based on entropy variables, but a regularization 
of the entropy itself that is used to define a regularized system. The same technique 
was employed also in \cite{GS14}. A lumped finite-element method was analyzed in 
\cite{FMSV17} for a reaction-cross-diffusion equation on a stationary surface with 
positive definite diffusion matrix. In \cite{JuLe18}, an implicit Euler Galerkin 
approximation in entropy variables for a Poisson--Maxwell--Stefan system was shown 
to converge. Recently, an abstract framework for the numerical approximation of 
evolution problems with entropy structure was presented in \cite{Egg18}. 
The discretization is based on a discontinuous Galerkin method in time and a 
Galerkin approximation in space. When applied to cross-diffusion systems, this 
approach also leads to an approximation in entropy variables that preserves the 
entropy dissipation. However, neither the existence of discrete solutions nor 
the convergence of the scheme are discussed.

Our main results are as follows:
\begin{itemize}
\item We propose an implicit Euler finite-element scheme for 
\eqref{eq:ion_trans}-\eqref{1.bc2} in entropy variables with linear finite 
elements (Section \ref{sec.fem}). The scheme preserves the
nonnegativity of the concentrations and the upper bounds, the total relative mass,
and it dissipates the discrete entropy associated to \eqref{1.h} if the boundary
data are in thermal equilibrium; see Fthe Remark \ref{rem.fem}.

\item We prove the existence of discrete solutions (Lemma \ref{lem.ex}) and
their convergence to the solution to \eqref{eq:ion_trans}-\eqref{1.bc2} when
the approximation parameters tend to zero (Theorem \ref{thm.conv}).
The convergence rate can be only computed numerically and is approximately
of second order (with respect to the $L^2$ norm).

\item The finite-element scheme and the finite-volume scheme of \cite{CCGJ18}
(recalled in Section \ref{sec.fvm}) 
are applied to two test cases in two space dimensions: 
a calcium-selective ion channel and a bipolar ion channel (Section \ref{sec:Num}).
Static current-voltage curves show the rectifying behavior of the bipolar
ion channel.

\item The advantages and drawbacks of both schemes are discussed 
(Section \ref{sec.Con}). The finite-element scheme allows for structure-preserving
properties under natural assumptions, while the finite-volume scheme can be
analyzed only under restrictive conditions. On the other hand, the
finite-volume scheme allows for vanishing initial concentrations and faster
algorithms compared to the finite-element scheme due to the highly nonlinear
structure of the latter formulation. 
\end{itemize}


\section{The finite-element scheme}\label{sec.fem}

\subsection{Notation and assumptions}

Before we define the finite-element discretization, we introduce our notation
and make precise the conditions assumed throughout this section. We assume:

\begin{labeling}{(A44)}
\item[(H1)] Domain: $\Omega\subset\R^d$ ($d=2$ or $d=3$) is an open, bounded, 
polygonal domain with $\pa\Omega=\Gamma_D\cup\Gamma_N\in C^{0,1}$, 
$\Gamma_D\cap\Gamma_N=\emptyset$, $\Gamma_N$ is open in $\pa\Omega$, and
$\mbox{meas}(\Gamma_D)>0$.

\item[(H2)] Parameters: $T>0$, $D_i>0$, $\beta>0$, and $z_i\in\R$,
$i=1,\ldots,n$.

\item[(H3)] Background charge: $f\in L^\infty(\Omega)$.

\item[(H4)] Initial and boundary data: $u_i^{I}\in H^2(\Omega)$ and
$\overline{u}_i\in H^2(\Omega)$ satisfy $u_i^{I} > 0$, $\overline{u}_i > 0$
for $i=1,\ldots,n$,  
$1-\sum_{i=1}^n u_i^{I} > 0$, $1-\sum_{i=1}^n \overline{u}_i > 0$
in $\Omega$, and $\overline{\Phi}\in H^2(\Omega)\cap L^\infty(\Omega)$.
\end{labeling}

The $H^2$ regularity of the initial and boundary data ensures that the standard 
interpolation converges to the given data, see \eqref{interp} below.

We consider equations \eqref{eq:ion_trans} on a finite time interval $(0,T)$ 
with $T>0$. For simplicity, we use a uniform time discretization with time step 
$\tau>0$ and set $t^k=k\tau$ for $k=1,\ldots,N$, where $N\in \N$ is given and 
$\tau=T/N$.

For the space discretization, we introduce a family $\T_h$ ($h>0$)
of triangulations of $\Omega$, consisting of open polygonal convex subsets of $\Omega$
(the so-called cells)
such that $\overline{\Omega}=\cup_{K\in\T_h}\overline{K}$ with maximal diameter 
$h=\max_{K\in\T_h}\text{diam}(K)$.  We assume that the corresponding family of edges
$\E$ can be split into internal and external edges, $\E=\E_{\rm int}\cup\E_{\rm ext}$
with $\E_{\rm int} = \{\sigma\in\E:\sigma\subset\Omega\}$ and $\E_{\rm ext}=\{\sigma
\in\E:\sigma\subset\pa\Omega\}$. Each exterior edge is assumed to be an element of
either the Dirichlet or Neumann boundary, i.e.\ $\E_{\rm ext}=\E_{\rm ext}^D\cup
\E_{\rm ext}^N$. 
For given $K\in\T_h$, we define the set $\E_K$ of the edges of $K$, which is the union
of internal edges and edges on the Dirichlet or Neumann boundary, and we set 
$\E_{K,\rm int}=\E_K\cap \E_{\rm int}$.

In the finite-element setting, the triangulation is completed by the set of nodes
$\{p_j:j\in J\}$. We impose the following regularity assumption on the
mesh. There exists a constant $\gamma\ge 1$ such that
$$
  \rho_K \le h_K \le \gamma\rho_K \quad\mbox{for all }K\in\T_h,
$$
where $\rho_K$ is the radius of the incircle and $h_K$ is the diameter of $K$.

We associate with $\T_h$ the usual conforming finite-element spaces
\begin{align*}
  \S(\T_h) &:= \{ \xi\in C^0(\overline{\Omega}): \xi|_K \text{ is linear for all }
	K\in\T_h \}\subset H^1(\Omega), \\
  \S_D(\T_h) &:= \S(\T_h)\cap H^1_D(\Omega),
\end{align*}
and $H_D^1(\Omega)$ is the set of $H^1(\Omega)$ functions that vanish on $\Gamma_D$
in the weak sense. 
Let $\{\chi_j\}_{j\in J}$ be the standard basis functions for $\S(\T_h)$ with 
$\chi_j(p_i)=\delta_{ij}$ for all $i,j\in J$. We define the nodal interpolation 
operator $I_h:C^0(\overline{\Omega})\to \S(\T_h)$ via $(I_h v)(p_j)=v(p_j)$ for all 
$v\in \S(\T_h)$ and $j\in J$. 
Due to the regularity assumptions on the mesh, $I_h$ has the following 
approximation property (see, e.g., \cite[Chapter 3]{Cia02}): 
\begin{equation}\label{interp}
  \lim_{h\to 0}\|\phi-I_h\phi\|_{H^1(\Omega)} = 0 
	\quad \text{for all }\phi\in H^2(\Omega).
\end{equation}


\subsection{Definition of the scheme}

To define the finite-element scheme, we need to approximate the initial and boundary
data. We set $w^0_i=I_h(\log(u_i^I/u_0^I))+\beta z_i\Phi^0$, where $\Phi^0$ is the 
standard finite-element solution to the linear equation \eqref{eq:Phi} with $u_i^I$ 
on the right-hand side. Furthermore, we set 
$\overline{w}_h=I_h(\log(\overline{u}_i/\overline{u}_0)+\beta z_i\overline{\Phi})$ 
and $\overline{\Phi}_h=I_h(\overline{\Phi})$.

The finite-element scheme is now defined as follows. Given $w^{k-1}\in\S(\T_h)^n$ and 
$\Phi^{k-1}\in\S(\T_h)$, find $w^k-\overline{w}_h\in \S_D(\T_h)^n$ and
$\Phi^k-\overline{\Phi}_h\in\S_D(\T_h)$ such that
\begin{align}
  & \frac{1}{\tau}\int_\Omega\big(u(w^k,\Phi^k) - u(w^{k-1},\Phi^{k-1})
	\big)\cdot\phi\, dx \nonumber \\
	&\phantom{xxxx}{}+ \int_\Omega\na\phi:B(w^k,\Phi^k)\na w^kdx 
	+ \eps\int_\Omega (w^k-\overline{w}_h)\cdot\phi\,dx = 0, \label{scheme1} \\
	& \lambda^2\int_\Omega\na\Phi^k\cdot\na\theta dx = \int_\Omega\bigg(\sum_{i=1}^n
	z_i u_i(w^k,\Phi^k)+f\bigg)\theta dx \label{scheme2}
\end{align}
for all $\phi\in \S_D(\T_h)^n$ and $\theta\in \S_D(\T_h)$. 
The symbol ``:'' signifies the Frobenius matrix product; here, the expression
reduces to 
$$
  \na\phi:B(w^k,\Phi^k)\na w^k = \sum_{i=1}^n D_iu_i(w^k,\Phi^k)u_0(w^k,\Phi^k)
	\na\phi_i\cdot\na w_i^k.
$$
The term involving the parameter
$\eps>0$ is only needed to guarantee the coercivity of \eqref{scheme1}-\eqref{scheme2}. 
Indeed, the diffusion matrix $B(w^k,\Phi^k)$ degenerates when $w^k_i\to -\infty$,
and the corresponding bilinear form is only positive semidefinite. 
To emphasize the dependence on the mesh 
and $\eps$, we should rather write $w^{(h,\eps,k)}$ instead of $w^k$ and similarly for 
$\Phi^k$; however, for the sake of presentation, we will mostly omit the additional 
superscripts. The original variables are recovered by computing
$u^{k}=u(w^{k},\Phi^{k})$ according to \eqref{1.uinv}. 
Setting $u^{(\tau)}(x,t)=u^k(x)$ for $x\in\Omega$, $t\in((k-1)\tau,k\tau]$,
$k=1,\ldots,N$, and $u^{(\tau)}(\cdot,0)=I_hu^I$ as well as similarly for
$\Phi^{(\tau)}$, we obtain piecewise constant in time functions.


\subsection{Existence of discrete solutions}

The first result concerns the existence of solutions to the nonlinear finite-element 
scheme \eqref{scheme1}-\eqref{scheme2}. 

\begin{lemma}[Existence of solutions and discrete entropy inequality]\label{lem.ex}
There exists a so\-lu\-tion to scheme \eqref{scheme1}-\eqref{scheme2} that satisfies 
the following discrete entropy inequality:
\begin{equation}
  H(u^k) + \tau\int_\Omega\na (w^k-\overline{w}_h):B(w^k,\Phi^k)\na w^k dx 
	+ \eps\tau \|w^k-\overline{w}_h\|_{L^2(\Omega)}^2 \le H(u^{k-1}), \label{2.epi}
\end{equation}
where $H$ is defined in \eqref{1.h} and 
$u^k=u(w^k,\Phi^k)$, $u^{k-1}=u(w^{k-1},\Phi^{k-1})$ are defined in \eqref{1.uinv}.
\end{lemma}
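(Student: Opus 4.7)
The plan is to establish both claims simultaneously by a Brouwer-degree (Leray--Schauder) fixed-point argument on the finite-dimensional space $\S(\T_h)^n$, with \eqref{2.epi} playing the dual role of $\sigma$-uniform a priori bound and final conclusion. The $\eps$-term in \eqref{scheme1} supplies the coercivity in $w$ that the degenerate diffusion matrix $B$ lacks, and is precisely what lets the degree argument close.

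The first step is to decouple the Poisson equation. For any fixed $y\in \S(\T_h)^n$, I would show that \eqref{scheme2} admits a unique $\Psi(y)\in \S(\T_h)$ with $\Psi(y)-\overline{\Phi}_h\in \S_D(\T_h)$. A direct differentiation of \eqref{1.uinv} yields
\[
  \frac{\pa}{\pa\Phi}\sum_{i=1}^n z_i u_i(y,\Phi)
  = \beta\bigg[\Big(\sum_j z_j u_j\Big)^2-\sum_i z_i^2 u_i\bigg] < 0,
\]
the inequality being Cauchy--Schwarz together with $\sum_i u_i<1$; hence the nonlinear operator associated to \eqref{scheme2} is strictly monotone and coercive in $\Phi$, giving a unique, continuous selection $y\mapsto\Psi(y)$. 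I would then define the homotopy $T(y,\sigma)=w$, where $w-\overline{w}_h\in \S_D(\T_h)^n$ is the unique solution of the linear problem
\[
  \int_\Omega\na\phi:B(y,\Psi(y))\na w\,dx + \eps\int_\Omega(w-\overline{w}_h)\cdot\phi\,dx
  = -\frac{\sigma}{\tau}\int_\Omega\big(u(y,\Psi(y))-u(w^{k-1},\Phi^{k-1})\big)\cdot\phi\,dx
\]
for all $\phi\in \S_D(\T_h)^n$. The bilinear form on the left is coercive on $\S_D(\T_h)^n$ (the diagonal $B$ has strictly positive entries and the $\eps$-term adds full $L^2$ coercivity), so $T$ is well defined and continuous with $T(\cdot,0)\equiv\overline{w}_h$, and fixed points of $T(\cdot,1)$ are exactly the solutions of \eqref{scheme1}--\eqref{scheme2}.

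Both the $\sigma$-uniform a priori bound and the entropy inequality \eqref{2.epi} come from testing the fixed-point equation with $\phi=w^k-\overline{w}_h$. Using \eqref{1.w_i} I decompose
\[
  w_i^k-\overline{w}_{h,i}
  = \Big(\log\frac{u_i^k}{u_0^k}-\log\frac{\overline{u}_i}{\overline{u}_0}\Big)
  + \beta z_i(\Phi^k-\overline{\Phi}_h).
\]
The chemical contribution $\sum_i\int(u_i^k-u_i^{k-1})(\log(u_i^k/u_0^k)-\log(\overline{u}_i/\overline{u}_0))dx$ is bounded below by $\int(h_1(u^k)-h_1(u^{k-1}))dx$ with $h_1(u)=\sum_{j=0}^n\int_{\overline{u}_j}^{u_j}\log(s/\overline{u}_j)\,ds$, by convexity of $h_1$ on $\D$. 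The electrostatic contribution $\beta\sum_i z_i\int(u_i^k-u_i^{k-1})(\Phi^k-\overline{\Phi}_h)dx$ is rewritten by subtracting \eqref{scheme2} at time levels $k$ and $k-1$ and testing the difference with $\theta=\Phi^k-\overline{\Phi}_h\in \S_D(\T_h)$, producing $\beta\lambda^2\int\na(\Phi^k-\Phi^{k-1})\cdot\na(\Phi^k-\overline{\Phi}_h)\,dx$; the elementary identity $a\cdot(a-b)=\tfrac12(|a|^2-|b|^2)+\tfrac12|a-b|^2$ with $a=\na(\Phi^k-\overline{\Phi}_h)$, $b=\na(\Phi^{k-1}-\overline{\Phi}_h)$ then bounds this below by $\frac{\beta\lambda^2}{2}(\|\na(\Phi^k-\overline{\Phi}_h)\|_{L^2}^2-\|\na(\Phi^{k-1}-\overline{\Phi}_h)\|_{L^2}^2)$. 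Summing the two contributions with the nonnegative diffusion and $\eps$-terms on the other side produces \eqref{2.epi}.

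The main obstacle I anticipate is the careful bookkeeping of the boundary contributions carried by $\overline{w}_h$ and $\overline{\Phi}_h$ across the two independent convexity arguments and the verification that the nodal interpolations preserve the exact algebraic relation $\overline{w}_{h,i}=I_h(\log(\overline{u}_i/\overline{u}_0))+\beta z_i\overline{\Phi}_h$ used in the decomposition. A secondary subtlety is that the diffusion part of \eqref{2.epi} only controls the degenerate weighted quantity $\sum_i D_i\int u_0 u_i|\na w_i|^2\,dx$, so it is the $\eps$-term that ultimately provides the non-degenerate $L^2$ bound on $w^k-\overline{w}_h$ needed to close the degree argument at $\sigma=1$.
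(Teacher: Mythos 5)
Your overall strategy coincides with the paper's: decouple the Poisson equation for fixed $y$, solve a linearized problem by Lax--Milgram with coercivity supplied solely by the $\eps$-term, apply Leray--Schauder in the finite-dimensional space, and obtain \eqref{2.epi} by testing with $w^k-\overline{w}_h$ and convexity. Two of your steps are in fact \emph{more} detailed than the paper's: the strict monotonicity of $\Phi\mapsto\sum_iz_iu_i(y,\Phi)$ is a correct sharpening of the paper's remark that this map is merely bounded, and your split of $w_i^k-\overline{w}_{h,i}$ into a chemical part (convexity of $h_1$ on $\D$) and an electrostatic part (subtracting the discrete Poisson equations at levels $k$ and $k-1$, testing with $\Phi^k-\overline{\Phi}_h$, and using $a\cdot(a-b)\ge\tfrac12(|a|^2-|b|^2)$) is precisely the computation hidden behind the paper's one-line appeal to ``convexity of $H$''. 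The interpolation discrepancy you flag ($I_h(\log(\overline{u}_i/\overline{u}_0))$ versus $\log(\overline{u}_i/\overline{u}_0)$ in the reference state of $H$) is real but is glossed over by the paper in exactly the same way.

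There is, however, one step that fails as written: the claim $T(\cdot,0)\equiv\overline{w}_h$. In your homotopy the diffusion term carries the full $\na w=\na(w-\overline{w}_h)+\na\overline{w}_h$ on the left-hand side, unscaled by $\sigma$. Hence at $\sigma=0$ the solution satisfies $\int_\Omega\na\phi:B\,\na(w-\overline{w}_h)\,dx+\eps\int_\Omega(w-\overline{w}_h)\cdot\phi\,dx=-\int_\Omega\na\phi:B(y,\Psi(y))\na\overline{w}_h\,dx$, whose right-hand side neither vanishes nor is independent of $y$; so $T(\cdot,0)$ is not the constant map $\overline{w}_h$, and the base case of the Leray--Schauder theorem is not verified. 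The repair is exactly what the paper does: take $v=w-\overline{w}_h$ as the unknown and put the \emph{entire} inhomogeneity, including $-\int_\Omega\na\phi:B\,\na\overline{w}_h\,dx$, on the right-hand side multiplied by the homotopy parameter, so that $S(y,0)=0$ for all $y$. With that modification your uniform bound on fixed points (which does go through, since $B$ is bounded and $H\ge 0$, so the entropy estimate controls $\eps\|w-\overline{w}_h\|_{L^2}^2$ uniformly in $\sigma$) closes the argument.
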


The proof of the lemma is similar to the proof of Theorem 1 in \cite{GeJu18}.
The main difference is that in \cite{GeJu18}, a regularization term of the type
$\eps((-\Delta)^m w^k+w^k)$ has been added to achieve via 
$H^m(\Omega)\hookrightarrow L^\infty(\Omega)$ for $m>d/2$
compactness and $L^\infty$ solutions.
In the finite-dimensional setting, this embedding is not necessary but we still
need the regularization $\eps w^k$ to conclude coercivity. 
We conjecture that this regularization is just technical but currently, we are not
able to remove it. Note, however, that we can use arbitrarily small values of $\eps$ 
in the numerical simulations such that the additional term does not affect the
solution {\em practically}.

\begin{proof}[Proof of Lemma \ref{lem.ex}.]
Let $y\in \S(\T_h)^n$ and $\delta\in[0,1]$. 
There exists a unique solution $\Phi^k$ to \eqref{scheme2} with $w^k$ replaced by 
$y+\overline{w}_h$,
satisfying $\Phi^k-\overline\Phi_h\in\S_D(\T_h)$, since the function
$\Phi\mapsto u_i(y,\Phi)$ is bounded. Moreover, the estimate
\begin{equation}\label{2.Phi}
  \|\Phi^k\|_{H^1(\Omega)} \le C(1+\|\overline{\Phi}_h\|_{H^1(\Omega)})
\end{equation}
holds for some constant $C>0$. 

Next, we consider the linear problem
\begin{equation}\label{2.LM}
  a(v,\phi)=F(\phi)\quad\mbox{for all }\phi\in \S_D(\T_h)^n,
\end{equation}
where
\begin{align*}
  a(v,\phi) &= \int_\Omega\na\phi:B(y+\overline{w}_h,\Phi^k)\na v\, dx
	+ \eps\int_\Omega v\cdot \phi \,dx, \\
	F(\phi) &= -\frac{\delta}{\tau}\int_\Omega\big(u(y+\overline{w}_h,\Phi^k)
	- u(w^{k-1},\Phi^{k-1})\big)\cdot\phi dx \\
	&\phantom{xx}{}
	- \delta\int_\Omega\na\phi:B(y+\overline{w}_h,\Phi^k)\na\overline{w}_h dx.
\end{align*}
The bilinear form $a$ and the linear form $F$ are continuous on $\S_D(\T_h)^n$.
The equivalence of all norms on the finite-dimensional space $\S_D(\T_h)$
implies the coercivity of $a$,
$$
  a(v,v) \ge \eps\|v\|_{L^2(\Omega)}^2 \ge \eps C\|v\|_{H^1(\Omega)}^2.
$$
By the Lax-Milgram lemma, there exists a unique solution $v\in \S_D(\T_h)^n$ to
this problem. This defines the fixed-point operator 
$S:\S_D(\T_h)^n\times[0,1]\to\S_D(\T_h)^n$, $S(y,\delta)=v$. The inequality
$$
  \eps C\|v\|_{H^1(\Omega)}^2 \le a(v,v) = F(v) \le C(\tau)\|v\|_{H^1(\Omega)}
$$
shows that all elements $v$ are bounded independently of $y$ and $\delta$
and thus, all fixed points $v=S(v,\delta)$ are uniformly bounded.
Furthermore, $S(y,0)=0$ for all $y\in \S_D(\T_h)^n$. The continuity of $S$
follows from standard arguments and the compactness comes from the fact that
$\S_D(\T_h)^n$ is finite-dimensional. By the Leray-Schauder fixed-point theorem,
there exists $v^k\in \S_D(\T_h)^n$ such that $S(v^k,1)=v^k$, and 
$w^k:=v^k+\overline{w}_h$ is a solution to \eqref{scheme1}.

The discrete entropy inequality \eqref{2.epi} is proven by using
$\tau(w^k-\overline{w}_h)\in \S_D(\T_h)^n$ as a test function in \eqref{scheme1}
and exploiting the convexity of $H$,
$$
  \int_\Omega(u^k-u^{k-1})\cdot (w^k-\overline{w}_h)dx
	= \int_\Omega(u^k-u^{k-1})\cdot \na h(u^k)dx
	\ge H(u^k)-H(u^{k-1}),
$$
which concludes the proof.
\end{proof}

\begin{remark}[Structure-preservation of the scheme]\label{rem.fem}\rm
Lemma \ref{lem.ex} shows that if the boundary data is in thermal equilibrium, 
i.e.\ $\na \overline{w}_h=0$, then
the finite-ele\-ment scheme \eqref{scheme1}-\eqref{scheme2}
dissipates the entropy \eqref{1.h}, i.e.\ $H(u^k)\le H(u^{k-1})$. Moreover, 
it preserves the invariant region $\mathcal{D}$, i.e.\ $u^k\in\mathcal{D}$, and
the mass fraction $u_i^k$ is nonnegative and bounded by one. 
The scheme conserves the total relative mass, i.e.\
$\sum_{i=0}^n \|u_i^k\|_{L^1(\Omega)}=1$, 
which is a direct consequence of the definition of $u_0^k$.
\qed
\end{remark}


\subsection{Uniform estimates}

The next step is the derivation of a priori estimates uniform in the parameters
$\eps$, $\tau$, and $h$. To this end, we transform back to the original variable
$u^k$ and exploit the discrete entropy inequality \eqref{2.epi}.

\begin{lemma}[A priori estimates]\label{lem.est}
For the solution to the finite-element scheme from Lemma \ref{lem.ex},
 the following estimates hold:
\begin{align}
  \|u_i^k\|_{L^\infty(\Omega)} 
	+ \eps\tau\sum_{j=1}^k\|w_i^j-\overline{w}_{i,h}\|_{L^2(\Omega)}^2
	&\le C, \label{2.est1} \\
	\tau\sum_{j=1}^k\Big(\|(u_0^j)^{1/2}\|_{H^1(\Omega)}^2 + \|u_0^j\|_{H^1(\Omega)}^2 
	+ \|(u_0^j)^{1/2}\na(u_i^j)^{1/2}\|_{L^2(\Omega)}^2\Big) &\le C, \label{2.est2} 
\end{align}
for $i=1,\ldots,n$, where here and in the following, 
$C>0$ is a generic constant independent of $\eps$, $\tau$, and $h$. 
\end{lemma}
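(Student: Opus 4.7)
The plan is to iterate the discrete entropy inequality \eqref{2.epi} from $j=1$ to $k$, yielding
\begin{equation*}
  H(u^k) + \tau\sum_{j=1}^k\int_\Omega\na(w^j-\overline{w}_h):B(w^j,\Phi^j)\na w^j\,dx + \eps\tau\sum_{j=1}^k\|w^j-\overline{w}_h\|_{L^2(\Omega)}^2 \le H(u^0),
\end{equation*}
and to extract from this the required estimates. The right-hand side is uniformly bounded: by (H4) the functions $\log(u_i^I/u_0^I)$ and $\overline{\Phi}$ are bounded and $H^2$-regular, so their nodal interpolants remain $L^\infty$-bounded and converge in $H^1$; combined with the Bregman form of $h$ in \eqref{1.h}, this gives $H(u^0)\le C$ uniformly in $h,\tau,\eps$.

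Next I would split $\na(w^j-\overline{w}_h):B\na w^j = \na w^j:B\na w^j - \na\overline{w}_h:B\na w^j$ and absorb the cross term by Young's inequality. Since $B$ is diagonal with $B_{ii}=D_iu_0^ju_i^j\le D_i/4$ (because $u_0+u_i\le 1$ forces $u_0u_i\le 1/4$), one obtains
\begin{equation*}
  \bigg|\int_\Omega\na\overline{w}_h:B\na w^j\,dx\bigg| \le \frac12\int_\Omega\na w^j:B\na w^j\,dx + C\|\na\overline{w}_h\|_{L^2(\Omega)}^2,
\end{equation*}
with the last term uniformly bounded by (H4) and \eqref{interp}. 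This produces
\begin{equation*}
  \tau\sum_{j=1}^k\int_\Omega\sum_{i=1}^n D_iu_0^ju_i^j|\na w_i^j|^2\,dx + \eps\tau\sum_{j=1}^k\|w^j-\overline{w}_h\|_{L^2(\Omega)}^2 \le C,
\end{equation*}
which already yields the $\eps\tau$-part of \eqref{2.est1}, while the pointwise bound $\|u_i^k\|_{L^\infty(\Omega)}\le 1$ is a direct consequence of the formula \eqref{1.uinv} and the invariant region \eqref{1.D}.

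To derive \eqref{2.est2}, I would exploit that on every cell $K\in\T_h$ the functions $w^j_i$ and $\Phi^j$ are affine, so $u^j_i=u_i(w^j,\Phi^j)$ is smooth on $K$ and the pointwise identity $\na w_i^j = \na u_i^j/u_i^j - \na u_0^j/u_0^j + \beta z_i\na\Phi^j$ holds. Substituting this into $u_0^j u_i^j|\na w_i^j|^2$, expanding the square, and using $|\na u_i|^2=4u_i|\na(u_i)^{1/2}|^2$, the purely diffusive part contributes the Fisher-type quantities $u_0^j|\na(u_i^j)^{1/2}|^2$, $u_i^j|\na(u_0^j)^{1/2}|^2$, together with an indefinite cross term $\na u_i^j\cdot\na u_0^j$. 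Summing over $i$ and invoking $\sum_{i=1}^n\na u_i^j=-\na u_0^j$, the sum is bounded below by a positive multiple of $|\na(u_0^j)^{1/2}|^2+\sum_{i=1}^n u_0^j|\na(u_i^j)^{1/2}|^2$. The indefinite cross terms with $\na\Phi^j$ are absorbed via Young's inequality against the positive Fisher-type terms, leaving a residual dominated by $\|\na\Phi^j\|_{L^2(\Omega)}^2$; the latter is uniformly bounded by testing \eqref{scheme2} with $\Phi^j-\overline{\Phi}_h$ and using the $L^\infty$-bound on $u_i^j$ (equivalently, through the contribution $\frac{\beta\lambda^2}{2}|\na(\Phi-\overline{\Phi})|^2$ already sitting in $H(u^k)$). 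Finally, $|\na u_0^j|^2=4u_0^j|\na(u_0^j)^{1/2}|^2\le 4|\na(u_0^j)^{1/2}|^2$ promotes the $(u_0^j)^{1/2}$-bound to an $H^1$-estimate for $u_0^j$ itself.

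The main obstacle I expect is the algebraic bookkeeping in the third step, particularly when the $D_i$ are not all equal: then $\sum_i D_i\na u_i^j$ does not simplify via $\sum_i\na u_i^j=-\na u_0^j$, and the cross terms $D_i\na u_i^j\cdot\na u_0^j$ must be controlled by Young's inequality with small weight, absorbing one half into the positive contributions $u_0^j|\na(u_i^j)^{1/2}|^2$ and $u_i^j|\na(u_0^j)^{1/2}|^2$; the same device handles the indefinite potential cross terms. A secondary technicality is that $u^j$ is not a finite-element function but a pointwise nonlinear composition of $w^j$ and $\Phi^j$, so the chain-rule identities must be carried out cell-wise before integration — harmless, since the integrand is smooth and bounded on each open cell.
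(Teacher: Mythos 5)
Your proposal is correct and follows essentially the same route as the paper: telescope the discrete entropy inequality \eqref{2.epi}, absorb the $\na\overline{w}_h$ cross term by Young's inequality, control $\na\Phi^j$ via the discrete Poisson equation (estimate \eqref{2.Phi}), and convert $u_0^ju_i^j|\na w_i^j|^2$ into Fisher-information-type quantities through $\na\log(u_i^j/u_0^j)=\na w_i^j-\beta z_i\na\Phi^j$. The one place where you anticipate more work than is needed is the ``indefinite'' cross terms $\na u_i^j\cdot\na u_0^j$: after pulling out $D_{\min}$, the constraint $\sum_{i=1}^n\na u_i^j=-\na u_0^j$ makes them cancel exactly, giving the identity $\sum_{i=1}^nu_i^ju_0^j|\na\log(u_i^j/u_0^j)|^2=4u_0^j\sum_{i=1}^n|\na(u_i^j)^{1/2}|^2+|\na u_0^j|^2+4|\na(u_0^j)^{1/2}|^2$ used in the paper, so no small-weight Young argument is required there even for unequal $D_i$.
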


\begin{proof}
As the proof is similar to that one in the continuous setting, we give only a sketch.
Observe that the definition of the entropy variables implies that
$0<u_i^k<1$ in $\Omega$ for $i=1,\ldots,n$ and $k=1,\ldots,N$. It is shown in
the proof of Lemma 6 of \cite{GeJu18} that
\begin{align*}
  \na (w^k-\overline{w}_h):B(w^k,\Phi^k)\na w^k
	&\ge \frac{D_{\rm min}}{4}\sum_{i=1}^n u_i^k u_0^k
	\bigg|\na\log\frac{u_i^k}{u_0^k}\bigg|^2 
	- \frac{D_{\rm min}}{2}\sum_{i=1}^n|\beta z_i\na\Phi^k|^2 \\
	&\phantom{xx}{}- \frac{D_{\rm max}}{2}\sum_{i=1}^n|\na \overline{w}_i|^2,
\end{align*}
where $D_{\rm min}=\min_{i=1,\ldots,n}D_i$ and $D_{\rm max}=\max_{i=1,\ldots,n}D_i$.
Then \eqref{2.epi} gives
\begin{align*}
  H(u^k) &+ \tau\frac{D_{\rm min}}{4}\int_\Omega\sum_{i=1}^n u_i^k u_0^k
	\bigg|\na\log\frac{u_i^k}{u_0^k}\bigg|^2 dx
	+ \eps\tau\|w^k-\overline{w}_h\|_{L^2(\Omega)}^2 \\
	&\le H(u^{k-1}) + \tau\frac{D_{\rm min}}{2}\sum_{i=1}^n|\beta z_i\na\Phi^k|^2 dx
	+ \tau\frac{D_{\rm max}}{2}\int_\Omega\sum_{i=1}^n|\na \overline{w}_{i,h}|^2 dx.
\end{align*}
We resolve this recursion to find that
\begin{align*}
  H(u^k) &+ \tau\frac{D_{\rm min}}{4}\sum_{j=1}^k
	\int_\Omega\sum_{i=1}^n u_i^ju_0^j
	\bigg|\na\log\frac{u_i^j}{u_0^j}\bigg|^2 dx
	+ \eps\tau\sum_{j=1}^k \|w^j-\overline{w}_{h}\|_{L^2(\Omega)}^2 \\
	&\le H(u^0) + \tau\frac{D_{\rm min}}{2}\sum_{j=1}^k\int_\Omega\sum_{i=1}^n
	|\beta z_i\na\Phi^j|^2 dx 
	+ \tau k\frac{D_{\rm max}}{2}\int_\Omega\sum_{i=1}^n|\na \overline{w}_{i,h}|^2 dx.
\end{align*}
The right-hand side is bounded because of \eqref{2.Phi}, $\tau k\le T$, and the
boundedness of the interpolation operator. Inserting the identity
$$
  \sum_{i=1}^n u_i^ju_0^j\bigg|\na\log\frac{u_i^j}{u_0^j}\bigg|^2
= 4u_0^j\sum_{i=1}^n|\na(u_i^j)^{1/2}|^2 + |\na u_0^j|^2 + 4|\na(u_0^j)^{1/2}|^2,
$$
the estimates follow.
\end{proof}


\subsection{Convergence of the scheme}

The a priori estimates from the previous lemma allow us to formulate our main result, 
the convergence of the finite-element solutions to a solution to the
continuous model \eqref{eq:ion_trans}-\eqref{1.bc2}.

\begin{theorem}[Convergence of the finite-element solution]\label{thm.conv}
Let $(u^{(h,\eps,\tau)},\Phi^{(h,\eps,\tau)})$ be an approximate solution 
constructed from scheme \eqref{scheme1}-\eqref{scheme2}. Set 
$u^{(h,\eps,\tau)}_0=1-\sum_iu^{(h,\eps,\tau)}_i$.
Then there exist functions $u_0$, $u=(u_1,\ldots,u_n)$, and $\Phi$,
satisfying $u(x,t)\in\overline\D$ ($\D$ is defined in \eqref{1.D}), 
$u_0=1-\sum_{i=1}^n u_i$ in $\Omega$, the regularity
$$
  u_0^{1/2},\, u_0^{1/2}u_i,\, \Phi \in L^2(0,T;H^1(\Omega)),\quad
	\pa_tu_i \in L^2(0,T;H^1_D(\Omega)')
$$ 
for $i=1,\ldots,n$, such that as $(h,\eps,\tau)\to 0$,
\begin{align*}
  (u^{(h,\eps,\tau)}_{0})^{1/2}\to u_0^{1/2},\ 
	(u^{(h,\eps,\tau)}_{0})^{1/2}u_{i}^{(h,\eps,\tau)}\to u_0^{1/2}u_i
	&\quad\mbox{strongly in }L^2(\Omega\times(0,T)), \\
  \Phi^{(h,\eps,\tau)}\to \Phi &\quad\mbox{strongly in }L^2(\Omega\times(0,T)),
\end{align*}
and $(u,\Phi)$ are a weak solution to
\eqref{eq:ion_trans}-\eqref{1.bc2}. In particular, for all 
$\phi\in L^2(0,T;H^1_D(\Omega))$ and $i=1,\ldots,n$, it holds that
\begin{align}
  &\int_0^T\langle\pa_t u_i,\phi\rangle\, dt + D_i\int_0^T\int_\Omega
	u_0^{1/2}\big(\na(u_0^{1/2}u_i) - 3u_i\na u_0^{1/2}\big)\cdot\na\phi\, dxdt 
	\nonumber \\
	&\phantom{xx}{}
	+ D_i\int_0^T\int_\Omega u_i u_0\beta z_i\na\Phi\cdot\na\phi\, dxdt = 0, 
	\label{1.weak1} \\
	& \lambda^2\int_0^T\int_\Omega\na\Phi\cdot\na\phi\, dxdt
	= \int_0^T\int_\Omega\bigg(\sum_{i=1}^n z_iu_i + f\bigg)\phi\, dxdt, \label{1.weak2}
\end{align}
where $\langle\cdot,\cdot\rangle$ is the duality pairing in $H^1_D(\Omega)'$
and $H_D^1(\Omega)$, 
and the boundary and initial conditions are satisfied in a weak sense.
\end{theorem}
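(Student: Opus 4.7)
The plan is to follow the standard entropy/compactness recipe: extract weakly convergent subsequences from the a priori bounds in Lemma \ref{lem.est}, upgrade to strong convergence of the nonlinear quantities via a degenerate Aubin--Lions type argument, and then pass to the limit in the scheme. From \eqref{2.est1}--\eqref{2.est2}, the piecewise constant interpolants $u_i^{(h,\eps,\tau)}$ are bounded in $L^\infty(\Omega\times(0,T))\cap(0,1)$, $u_0^{(h,\eps,\tau)}$ and $(u_0^{(h,\eps,\tau)})^{1/2}$ are bounded in $L^2(0,T;H^1(\Omega))$, and $(u_0^{(h,\eps,\tau)})^{1/2}\nabla(u_i^{(h,\eps,\tau)})^{1/2}$ is bounded in $L^2$. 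The Poisson equation \eqref{scheme2} combined with \eqref{2.Phi} yields a uniform $L^2(0,T;H^1(\Omega))$ bound on $\Phi^{(h,\eps,\tau)}$, and elliptic regularity on the finite-element solution gives weak convergence $\Phi^{(h,\eps,\tau)}\rightharpoonup\Phi$. The regularization term is controlled since $\eps\tau\sum_j\|w^j-\overline w_h\|_{L^2}^2\le C$ implies $\eps(w^{(h,\eps,\tau)}-\overline w_h)\to 0$ in $L^2(0,T;L^2(\Omega))$.

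Next I would derive a uniform estimate on the discrete time derivative. Taking $\phi=I_h\widetilde\phi$ with $\widetilde\phi\in L^2(0,T;H^1_D(\Omega))$ in \eqref{scheme1}, using that $B(w^k,\Phi^k)\nabla w^k$ decomposes into $D_i(u_0\nabla u_i-u_i\nabla u_0+\beta z_i u_0 u_i\nabla \Phi^k)$, and controlling each term by the a priori bounds, yields a uniform bound on $\partial_t u_i^{(h,\eps,\tau)}$ in $L^2(0,T;H^1_D(\Omega)')$ (up to the finite-element projection error, which vanishes thanks to \eqref{interp}). With this bound together with the spatial $H^1$ bound on $u_0^{(h,\eps,\tau)}$ and the nonlinear control on $(u_0)^{1/2}\nabla(u_i)^{1/2}$, the degenerate Aubin--Lions lemma (\cite[Appendix C]{Jue15}, or its discrete analogue \cite[Lemma 10]{CCGJ18}) gives strong convergence of $(u_0^{(h,\eps,\tau)})^{1/2}$ and of $(u_0^{(h,\eps,\tau)})^{1/2}u_i^{(h,\eps,\tau)}$ in $L^2(\Omega\times(0,T))$. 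Combined with the $L^\infty$ bound and dominated convergence, this is enough to identify all relevant products in the limit.

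To pass to the limit in the weak formulation, I rewrite the flux in the form that appears in \eqref{1.weak1} via the algebraic identity
\begin{equation*}
u_0\nabla u_i - u_i\nabla u_0 = u_0^{1/2}\bigl(\nabla(u_0^{1/2}u_i)-3u_i\nabla u_0^{1/2}\bigr),
\end{equation*}
which one checks by expanding and using $u_0^{1/2}\nabla u_0^{1/2}=\tfrac12\nabla u_0$. Writing the discrete diffusion term in this way, all factors become either strongly convergent sequences in $L^2$ or bounded sequences weakly converging in $L^2$, so products pass to the limit. For a generic test function $\phi\in L^2(0,T;H^1_D(\Omega))$ I would use $I_h\phi$ in \eqref{scheme1} and invoke \eqref{interp} to handle the discrepancy; the time-piecewise-constant structure is handled by a standard telescoping argument on the discrete time derivative. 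Weak convergence in $L^2(0,T;H^1(\Omega))$ of $\Phi^{(h,\eps,\tau)}$, combined with the strong $L^2$ convergence of $u_i^{(h,\eps,\tau)}$ (obtained from $u_0^{1/2}u_i$ and $u_0^{1/2}$ via a pointwise almost-everywhere argument), lets me pass to the limit in the Poisson equation \eqref{scheme2} and in the drift term $u_0 u_i\beta z_i\nabla \Phi$.

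The main obstacle is the passage to the limit in the degenerate nonlinear flux: the product $u_0\nabla u_i$ is not directly controlled, only the degenerate combination $(u_0)^{1/2}\nabla(u_i)^{1/2}$ is bounded, so the rewriting via the identity above is essential and one must carefully verify that the weak limit of $\nabla(u_0^{1/2}u_i^{(h,\eps,\tau)})$ equals $\nabla(u_0^{1/2}u_i)$ (which uses the strong $L^2$ convergence of $u_0^{1/2}u_i^{(h,\eps,\tau)}$ together with a uniform $L^2(0,T;H^1(\Omega))$ bound on the same quantity, obtained from $\nabla(u_0^{1/2}u_i)=u_i\nabla u_0^{1/2}+u_0^{1/2}\nabla u_i$ and the entropy-production terms). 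The boundary conditions on $\Gamma_D$ are inherited because $w^k-\overline w_h,\Phi^k-\overline\Phi_h\in\S_D(\T_h)$ and $\overline w_h,\overline\Phi_h$ converge to the prescribed traces by \eqref{interp}; the initial condition follows from $u^{(\tau)}(\cdot,0)=I_h u^I\to u^I$ strongly, together with the bound on $\partial_t u_i$.
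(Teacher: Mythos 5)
Your overall strategy is sound and shares the key ingredients with the paper's proof (the entropy/a priori bounds of Lemma \ref{lem.est}, the algebraic rewriting of the flux in the degenerate combinations $u_0^{1/2}\na(u_0^{1/2}u_i)-3u_i\na u_0^{1/2}$, testing with $I_h\phi$ and using \eqref{interp}, and killing the $\eps$-term via \eqref{2.est1}), but you order the limits differently. The paper first fixes the time level $k$ and sends $(\eps,h)\to 0$, so that only \emph{elliptic} compactness ($H^1(\Omega)\hookrightarrow\hookrightarrow L^2(\Omega)$) is needed at that stage; the time-derivative estimate and the degenerate Aubin--Lions argument are then invoked only for the subsequent limit $\tau\to 0$ of the semi-discrete (space-continuous) problem, which is delegated to \cite[Theorem 1, Step 4]{GeJu18}. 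You instead do the simultaneous limit $(h,\eps,\tau)\to 0$ on the piecewise-constant-in-time interpolants, which forces you to prove a uniform bound on the discrete time derivative in $L^2(0,T;H^1_D(\Omega)')$ and to use a \emph{discrete} degenerate Aubin--Lions lemma as in \cite{CCGJ18}. That is a viable route (it is essentially the finite-volume paper's route), but note that the scheme only controls $\langle(u^k-u^{k-1})/\tau,\phi_h\rangle$ for $\phi_h\in\S_D(\T_h)^n$, so upgrading to the full dual norm requires an $H^1$-stable interpolation/projection; \eqref{interp} alone (convergence for $H^2$ functions) does not immediately give this, and this point deserves more than the parenthetical you give it. The paper's two-stage ordering avoids the issue entirely.

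One step of your argument is genuinely wrong as stated: you claim strong $L^2$ convergence of $u_i^{(h,\eps,\tau)}$ itself, ``obtained from $u_0^{1/2}u_i$ and $u_0^{1/2}$ via a pointwise almost-everywhere argument.'' This recovery only works on the set $\{u_0>0\}$; on $\{u_0=0\}$ (which may have positive measure in the limit) the products $u_0^{1/2}$ and $u_0^{1/2}u_i$ carry no information about the individual $u_i$, which is precisely the degeneracy of the problem. Fortunately this strong convergence is not needed anywhere: the Poisson right-hand side is linear in $u_i$, so the weak-$*$ $L^\infty$ convergence \eqref{2.eps0} suffices there, and the drift term should be handled as in \eqref{conv2} by writing $u_iu_0=u_0^{1/2}\cdot(u_0^{1/2}u_i)$, which converges strongly in $L^2$ by \eqref{2.comb}, paired with the weak convergence of $\na\Phi^{(h,\eps,\tau)}$. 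Replace that step accordingly and the proof goes through.
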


\begin{proof}
We pass first to the limit $(\eps,h)\to 0$ and then $\tau\to 0$, since the latter limit
can be performed as in the proof of Theorem 1 in \cite{GeJu18}. 
Fix $k\in\{1,\ldots,N\}$ and let $u_i^{(\eps,h)}=u_i^{(\eps,h,k)}$ and 
$\Phi^{(\eps,h)}=\Phi^{(\eps,h,k)}$ be the approximate solution from Lemma \ref{lem.ex}. 
We set $u_0^{(\eps,h)}=1-\sum_{i=1}^n u_i^{(\eps,h)}$. 
Using the compact embedding $H^1(\Omega)\hookrightarrow L^2(\Omega)$ and 
the a priori estimates from Lemma \ref{lem.est}, it follows that there exists a
subsequence which is not relabeled such that, as $(\eps,h)\to 0$,
\begin{align}
  u_i^{(\eps,h)} \rightharpoonup^* u_i^k &\quad\mbox{weakly* in }L^\infty(\Omega),
	\ i=1,\ldots,n,	\label{2.eps0} \\
  (u_0^{(\eps,h)})^{1/2}\rightharpoonup (u_0^k)^{1/2}, 
	\quad \Phi^{(\eps,h)}\rightharpoonup\Phi^k
	&\quad\mbox{weakly in }H^1(\Omega), \label{2.H1} \\
	u_0^{(\eps,h)}\to u_0^k, \quad \Phi^{(\eps,h)}\to \Phi^k
	&\quad\mbox{strongly in }L^2(\Omega), \label{2.eps1} \\
	\eps (w_i^{(\eps,h)}-\overline{w}_{i,h})\to 0 &\quad\mbox{strongly in }L^2(\Omega). 
	\label{2.weps}
\end{align}
Combining \eqref{2.est2} and \eqref{2.H1}, we infer that (up to a subsequence)
\begin{equation}\label{2.comb}
  u_i^{(\eps,h)}(u_0^{(\eps,h)})^{1/2}\rightharpoonup u_i^k(u_0^k)^{1/2}
  \quad\mbox{weakly in }H^1(\Omega)\mbox{ and strongly in }L^2(\Omega).
\end{equation}

Next, let $\phi\in (H^2(\Omega)\cap H_D^1(\Omega))^n$. 
As we cannot use $\phi_i$ directly as a 
test function in \eqref{scheme1}, we take $I_h\phi\in\S_D(\T_h)^n$, where
$I_h$ is the interpolation operator, see \eqref{interp}. 
In order to pass to the limit in \eqref{scheme1}, we rewrite the integral involving
the diffusion matrix:
\begin{align}
  \int_\Omega & \na (I_h\phi):B(w^{(\eps,h)},\Phi^{(\eps,h)})\na w^{(\eps,h)} dx
	= \int_\Omega\sum_{i=1}^n D_i u_i^{(\eps,h)}u_0^{(\eps,h)}\na w_i^{(\eps,h)}
	\cdot\na (I_h\phi_i) dx \nonumber\\
	&= \int_\Omega\sum_{i=1}^n D_i\Big((u_0^{(\eps,h)})^{1/2}\na\big(u_i^{(\eps,h)}
	(u_0^{(\eps,h)})^{1/2}\big) 
	- 3u_i^{(\eps,h)}(u_0^{(\eps,h)})^{1/2}\na(u_0^{(\eps,h)})^{1/2} \nonumber\\ 
	&\phantom{xx}{}
	+ \beta z_i u_i^{(\eps,h)} u_0^{(\eps,h)}\na\Phi^{(\eps,h)}\Big)
	\cdot\na( I_h\phi_i) dx. \label{for_limit}
\end{align}
We estimate each of the above summands separately. 
For the last term, we proceed as follows:
\begin{align}
  \bigg| \int_{\Omega} & u_i^{(\eps,h)} u_0^{(\eps,h)}\na\Phi^{(\eps,h)}
	\cdot\na(I_h\phi_i)dx - \int_{\Omega}u_i^k u_0^k\na\Phi^k\cdot\na\phi_idx\bigg| 
	\nonumber \\
  &\le \bigg| \int_{\Omega}u_i^{(\eps,h)} u_0^{(\eps,h)}\na\Phi^{(\eps,h)}
	\cdot\na( I_h\phi_i -\phi_i)dx \bigg| \nonumber \\
	&\phantom{xx}{}+ \bigg| \int_{\Omega}(u_i^{(\eps,h)} u_0^{(\eps,h)}\na\Phi^{(\eps,h)}
	-u_i^k u_0^k\na\Phi^k)\cdot\na\phi_idx \bigg| \nonumber \\
  &\le \|u_i^{(\eps,h)} u_0^{(\eps,h)}\na\Phi^{(\eps,h)}\|_{L^2(\Omega)}
	\|\na( I_h\phi_i -\phi_i)\|_{L^2(\Omega)} \nonumber \\
	&\phantom{xx}{}+ \bigg| \int_{\Omega}(u_i^{(\eps,h)} u_0^{(\eps,h)}\na\Phi^{(\eps,h)}
	-u_i^k u_0^k\na\Phi^k)\cdot\na\phi_idx \bigg|. \label{conv2}
\end{align}
Similarly as for \eqref{2.comb}, it follows that
$$
  u_i^{(\eps,h)}u_0^{(\eps,h)}\to u_i^ku_0^k \quad\mbox{strongly in }L^2(\Omega).
$$
Then, together with the weak convergence of $\na\Phi^{(\eps,h)}$, we infer that 
$$
  u_i^{(\eps,h)}u_0^{(\eps,h)}\na\Phi^{(\eps,h)}
	\rightharpoonup u_i^ku_0^k\na\Phi^k \quad\mbox{weakly in }L^1(\Omega).
$$
Since $(u_i^{(\eps,h)}u_0^{(\eps,h)}\na\Phi^{(\eps,h)})$ is bounded in $L^2(\Omega)$,
this weak convergence also holds in $L^2(\Omega)$. 
Because of the interpolation property \eqref{interp} and estimate \eqref{conv2},
$$
  \int_{\Omega}u_i^{(\eps,h)} u_0^{(\eps,h)}\na\Phi^{(\eps,h)}
	\cdot\na( I_h\phi_i)dx 
	\to \int_{\Omega}u_i^k u_0^k\na\Phi^k\cdot\na\phi_idx.
$$
Following the arguments of Step 3 in \cite[Section 2]{GeJu18} (using \eqref{2.comb}), 
we have
\begin{align*}
  (u_0^{(\eps,h)})^{1/2}&\na\big(u_i^{(\eps,h)}(u_0^{(\eps,h)})^{1/2}\big) 
	- 3u_i^{(\eps,h)}(u_0^{(\eps,h)})^{1/2}\na\big((u_0^{(\eps,h)})^{1/2}\big) \\
  &\rightharpoonup (u_0^k)^{1/2}\na\big(u_0^k(u_0^k)^{1/2}\big)
	- 3u_i^k(u_0^k)^{1/2}\na((u_0^k)^{1/2}) \quad\mbox{weakly in }L^2(\Omega).
\end{align*}
Thus, the limit $(\eps,h)\to 0$ in \eqref{for_limit} gives
\begin{align*}
  \lim_{(\eps,h)\to 0}\int_\Omega &
	\na (I_h\phi):B(w^{(\eps,h)},\Phi^{(\eps,h)})\na w^{(\eps,h)} dx 
	= \int_\Omega\sum_{i=1}^n D_i\Big((u_0^k)^{1/2}\na\big(u_0^k(u_0^k)^{1/2}\big) \\
	&\phantom{xx}{}
	- 3u_i^k(u_0^k)^{1/2}\na((u_0^k)^{1/2}) + \beta z_iu_i^ku_0^k\na\Phi^k\Big)\cdot
	\na\phi_i dx.
\end{align*}
Furthermore, we deduce from \eqref{2.weps} that
$$
  \bigg|\eps\int_\Omega(w_i^{(\eps,h)}-\overline{w}_{i,h})(I_h\phi_i)dx\bigg|
	\le \eps\|w_i^{(\eps,h)}-\overline{w}_{i,h}\|_{L^2(\Omega)}
	\|I_h\phi_i\|_{L^2(\Omega)} \to 0.
$$
Thus, passing to the limit $(\eps,h)\to 0$ in scheme \eqref{scheme1}-\eqref{scheme2}
leads to
\begin{align*}
  &\frac{1}{\tau}\int_\Omega(u^k-u^{k-1})\cdot\phi dx 
  + \int_\Omega\sum_{i=1}^n D_i(u_0^k)^{1/2}
  \big(\na(u_i^k(u_0^k)^{1/2}) - 3u_i^k\na (u_0^k)^{1/2}\big)\cdot\na\phi_i dx \\
  &\phantom{xx}{}
  + \int_\Omega\sum_{i=1}^n D_i\beta z_i u_i^ku_0^k\na\Phi^k
  \cdot\na \phi_i dx = 0,	\\
  & \lambda^2\int_\Omega\na\Phi^k\cdot\na\theta dx 
  = \int_\Omega\bigg(\sum_{i=1}^n z_iu_i^k + f\bigg)\theta dx,
\end{align*}
for all $\phi_i$, $\theta\in H^2(\Omega)\cap H_D^1(\Omega)$. A density argument shows 
that we can take test functions $\phi_i$, $\theta\in H_D^1(\Omega)$. 
The a priori estimates from Lemma \ref{lem.est} remain valid in the weak limit. 

Now the limit $\tau\to 0$ can be done exactly as in \cite[Theorem 1, Step 4]{GeJu18}, 
which concludes the proof.
\end{proof}


\section{The finite-volume scheme}\label{sec.fvm}

We briefly recall the finite-volume scheme from \cite{CCGJ18} and summarize
the assumptions and results, as this is necessary for the comparison of the
finite-element and finite-volume scheme in Section \ref{sec.Con}.

We assume that Hypotheses (H1)-(H4) from the previous section hold and we use
the same notation for the time and space discretizations. For a two-point
approximation of the discrete gradients, we require additionally that the
mesh is admissible in the sense of \cite[Definition 9.1]{EGH00}.
This means that a family of points $(x_K)_{K\in\T}$ is associated to the cells 
and that the line connecting the points $x_K$ and $x_L$ of two neighboring cells 
$K$ and $L$ is perpendicular to the edge $K|L$. For $\sigma\in\E_{\rm int}$ with 
$\sigma=K|L$, we denote by $\dist_\sigma=\dist(x_K,x_L)$ 
the Euclidean distance between $x_K$ and $x_L$, while
for $\sigma\in\E_{\rm ext}$, we set $\dist_\sigma=\dist(x_K,\sigma)$.
For a given edge $\sigma\in\E$, the transmissibility coefficient is defined by
$\tau_\sigma = \text{m}(\sigma)/\dist_\sigma$,
where $\text{m}(\sigma)$ denotes the Lebesgue measure of $\sigma$.

For the definition of the scheme, we approximate the initial, boundary, and given 
functions on the elements $K\in\T$ and edges $\sigma\in\E$:
\begin{align*}
  u^{I}_{i,K} &= \frac{1}{\text{m}(K)}\int_{K}u^{I}_i(x)dx, &
	f_K &= \frac{1}{\text{m}(K)}\int_{K}f(x)dx, \\
  \overline{u}_{i,\sigma} &= \frac{1}{\text{m}(\sigma)}\int_{\sigma}\overline{u}_ids, & 
	\overline{\Phi}_{\sigma} &= \frac{1}{\text{m}(\sigma)}\int_{\sigma}\overline{\Phi}ds,
\end{align*}
and we set $u_{0,K}^I= 1-\sum_{i=1}^n u_{i,K}^I$ and 
$\overline{u}_{0,\sigma}=1-\sum_{i=1}^n\overline{u}_{i,\sigma}$. 
Furthermore, we introduce the discrete gradients
\begin{align}
	&\text{D}_{K,\sigma}(u_i) = u_{i,K,\sigma}-u_{i,K}, 
  &\mbox{where}\quad u_{i,K,\sigma} = \begin{cases}
	u_{i,L} \quad&\text{ for }\sigma\in\E_{\rm int},\ \sigma=K|L,\\
  \overline{u}_{i,\sigma} \quad&\text{ for }\sigma\in\E^D_{{\rm ext},K},\\
  u_{i,K} &\text{ for }\sigma\in\E^N_{{\rm ext},K}. 
  \end{cases} \nonumber
\end{align}

The numerical scheme is now defined as follows. Let $K\in\T$, $k\in\{1,\ldots,N\}$, 
$i\in\{1,\ldots,n\}$, and $u_{i,K}^{k-1}\ge 0$ be given. Then the values $u_{i,K}^k$ 
are determined by the implicit Euler scheme
\begin{equation}\label{2.equ}
  \text{m}(K)\frac{u_{i,K}^k-u_{i,K}^{k-1}}{\Delta t} 
	+ \sum_{\sigma\in\E_K}\F_{i,K,\sigma}^k = 0,
\end{equation}
where the fluxes $\F_{i,K,\sigma}^k$ are given by the upwind scheme
\begin{equation*}
  \F_{i,K,\sigma}^k = -\tau_\sigma D_i\Big(u_{0,\sigma}^k\text{D}_{K,\sigma}(u_i^k)
	- u_{i,\sigma}^k\big(\text{D}_{K,\sigma}(u_0^k) - \widehat{u}_{0,\sigma,i}^k
	\beta z_i\text{D}_{K,\sigma}(\Phi^k)\big)\Big).
\end{equation*}
Here, we have set
\begin{align*}
  & u_{0,K}^k=1-\sum_{i=1}^n u_{i,K}^k, \quad 
	u_{0,\sigma}^k = \max\{u_{0,K}^k,u_{0,L}^k\}, \\
	& u^k_{i,\sigma} = \begin{cases}
  u^k_{i,K} \quad & \text{if } \V^k_{i,K,\sigma}\ge 0, \\
  u^k_{i,K,\sigma} & \text{if }\V^k_{i,K,\sigma}< 0, 
  \end{cases}, \quad
  \widehat{u}_{0,\sigma,i}^k = \begin{cases}
  u^k_{0,K} \quad & \text{if }z_i\text{D}_{K,\sigma}(\Phi^k)\ge 0, \\
  u^k_{0,K,\sigma} & \text{if }z_i\text{D}_{K,\sigma}(\Phi^k)< 0, 
  \end{cases},
\end{align*}
and $\V_{i,K,\sigma}^k$ is the ``drift part'' of the flux,
\begin{equation*}
  \V_{i,K,\sigma}^k = \text{D}_{K,\sigma}(u_0^k) - \widehat{u}_{0,\sigma,i}^k
	\beta z_i\text{D}_{K,\sigma}(\Phi^k)
\end{equation*}
for $i=1,\ldots,n$. Observe that we employed a double upwinding: one related to
the electric potential, defining $\widehat{u}_{0,\sigma,i}^k$, and
another one related to the drift part of the flux, $\V_{i,K,\sigma}^k$.
The potential is computed via
\begin{equation*}
  -\lambda^2\sum_{\sigma\in\E_K}\tau_\sigma\text{D}_{K,\sigma}(\Phi^k)
	= \text{m}(K)\bigg(\sum_{i=1}^n z_iu_{i,K}^k + f_K\bigg).
\end{equation*}

The finite volume scheme preserves the structure of the continuous equations only 
under certain assumptions:
\begin{labeling}{(A44)}
	\item[(A1)] $\pa\Omega=\Gamma_N$, i.e., we impose no-flux
	boundary conditions on the whole boundary.
	\item[(A2)] The diffusion constants are equal, $D_i=D>0$ for $i=1,\ldots,n$.
	\item[(A3)] The drift terms are set to zero, $\Phi\equiv 0$.
\end{labeling}
Without these assumptions, we can only assure the nonnegativity of the discrete
concentrations $u_i$, $i=1,\ldots,n$. Since we lack a maximum principle for 
cross-diffusion systems, the upper bounds can only be proven if we assume 
equal diffusion constants (A2). Under this assumption, the solvent concentration 
satisfies
$$
  \pa_t u_0 = D\diver(\na u_0 - u_0 w\na\Phi), \quad\mbox{where } 
	w = \beta\sum_{i=1}^n z_iu_i,
$$
for which a discrete maximum principle can be applied. The $L^\infty$ bounds on 
the concentrations then ensure the existence of solutions for the scheme. 
If additionally the drift term vanishes (A3), a discrete version of the entropy 
inequality, the uniqueness of discrete solutions and most importantly, 
the convergence of the scheme can be proven (under an additional regularity
assumption on the mesh). For details, we refer to \cite{CCGJ18}.


\section{Numerical experiments}\label{sec:Num}

\subsection{Implementation} 

The finite-element discretization is implemented within the finite-element 
library NGSolve/Netgen, see \cite{Sch14,Sch97}. The nonlinear equations are solved 
in every time step by Newton's method in the variables $w_i$ and $\Phi$. 
The Jacobi matrix is computed using the NGSolve function {\tt AssembleLinearization}. 
The finite-volume scheme is implemented in Matlab. Also here, the nonlinear equations 
are solved by Newton's method in every time step, using the variables $u$, $\Phi$, 
and $u_0$. 

We remark that the finite-volume scheme also performs well when we use a simpler 
semi-implicit scheme, where we compute $u$ from equation \eqref{2.equ} with 
$\Phi$ taken from the previous time step via Newton's method and subsequently only 
need to solve a linear equation to compute the update for the potential. 
It turned out that this approach is not working for the finite-element discretization.
Furthermore, the computationally cheaper implementation used in \cite{JuLe18} for
a similar scheme in one space dimension, where a Newton and Picard iteration are 
combined, did not work well in the two-dimensional test cases presented in 
this paper.


\subsection{Test case 1: calcium-selective ion channel}

Our first test case models the basic features of an L-type calcium channel
(the letter L stands for ``long-lasting'', referring to the length of activation).
This type of channel is of great biological importance, as it is present in the 
cardiac muscle and responsible for coordinating the contractions of the heart 
\cite{CSBB01}. The selectivity for calcium in this channel protein is caused by the 
so-called EEEE-locus made up of four glutamate residues. We follow the modeling 
approach of \cite{NGHE01}, where the glutamate side chains are each treated as two 
half charged oxygen ions, accounting for a total of eight $O^{1/2-}$ ions confined 
to the channel. In contrast to \cite{NGHE01}, where the oxygen ions are described by 
hard spheres that are free to move inside the channel region, we make a further 
reduction and simply consider a constant density of oxygen in the channel that 
decreases linearly to zero in the baths (see Figure \ref{fig.geom}),
$$
  u_{\rm ox}(x,y) = u_{{\rm ox},\max}\times\begin{cases}
  1 \quad &\text{for }0.45 \le x \le 0.55, \\
  10(x-0.35) \quad &\text{for }0.35 \le x \le 0.45,\\
  10(0.65-x) \quad &\text{for }0.55 \le x \le 0.65, \\
  0 \quad &\text{else},
  \end{cases}
$$
where the scaled maximal oxygen concentration equals 
$u_{{\rm ox},\max}=(N_A/u_{\rm typ})\cdot 52\,\,$mol/L. Here,
$N_A\approx 6.022\cdot 10^{23}\,$mol$^{-1}$ is the Avogadro constant and
$u_{\rm typ}=3.7037\cdot 10^{25} L^{-1}$ the typical concentration
(taken from \cite[Table 1]{BSW12}). In addition to the immobile oxygen ions, we 
consider three different species of ions, whose concentrations evolve according to 
model equations \eqref{eq:ion_trans}: calcium (Ca$^{2+}$, $u_1$), 
sodium (Na$^+$, $u_2$), and chloride (Cl$^-$, $u_3$). We assume that the oxygen ions 
not only contribute to the permanent charge density $f=-u_{\rm ox}/2$, but also take 
up space in the channel, so that we have $u_0=1-\sum_{i=1}^3 u_i-u_{\rm ox}$ for the 
solvent concentration. 

For the simulation domain, we take a simple geometric setup 
resembling the form of a channel; see Figure \ref{fig.geom}. The boundary conditions 
are as described in the introduction, with constant values for the ion concentrations 
and the electric potential in the baths. The physical parameters used in our simulations 
are taken from \cite[Table 1]{BSW12}. The simulations are performed with a constant 
(scaled) time step size $\tau=2\cdot 10^{-4}$. 
The initial concentrations are simply taken 
as linear functions connecting the boundary values. An admissible mesh consisting of 
74 triangles was created with Matlab's {\tt initmesh} command, which produces 
Delauney triangulations. Four finer meshes were obtained by regular refinement, 
dividing each triangle into four triangles of the same shape.

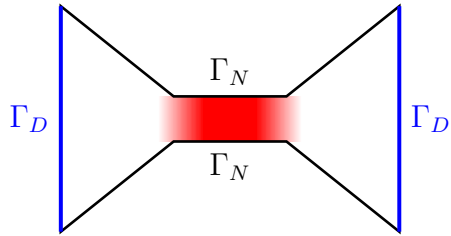
\begin{figure}[ht]
	\begin{tikzpicture}
	\shade[left color=white,right color=red] (1.3,1.2) rectangle (1.9,1.8);
	\shade[left color=red,right color=white] (2.6,1.2) rectangle (3.2,1.8);
	\draw [fill=red,red] (1.9,1.2) rectangle (2.6,1.8);
	\draw[line width=1pt] (0,0) -- (0,3) node[midway, left, blue]{$\Gamma_D$} -- (1.5,1.8) -- (3,1.8) node[midway, above]{$\Gamma_N$} -- (4.5,3) -- (4.5,0) node[midway, right, blue]{$\Gamma_D$} -- (3,1.2) -- (1.5,1.2) node[midway, below]{$\Gamma_N$} -- (0,0)--cycle;
	\draw[blue, line width=1.5pt] (0,0) -- (0,3);
	\draw[blue, line width=1.5pt] (4.5,3) -- (4.5,0);
	\end{tikzpicture}
	\caption{Schematic picture of the ion channel $\Omega$ used for the simulations. 
		Dirichlet boundary conditions are prescribed on $\Gamma_D$ (blue), 
		homogeneous Neumann boundary conditions are given on $\Gamma_N$ (black). 
		The red color represents the density of confined $O^{1/2-}$ ions.}
	\label{fig.geom}
\end{figure}

We remark that the same test case was already used in \cite{CCGJ18} to illustrate 
the efficiency of the finite-volume approximation. Furthermore, numerical simulations
for a one-dimensional approximation of the calcium channel can be found in \cite{BSW12} 
for stationary solutions and in \cite{GeJu18} for transient solutions.

Figures \ref{fig.sol} and \ref{fig.equi} present the solution to the ion transport model 
in the original variables $u$ and $\Phi$ at two different times; the first one after 
only 600 time steps and the second one after 6000 time steps, which is already close 
to the equilibrium state. The results are computed on the finest mesh with
18,944 elements. In the upper panel, the concentration profiles and electric
potential as computed with the finite-element scheme are depicted. In the 
lower panel, the difference between the finite-volume and finite-element solutions 
is plotted. We have omitted the plots for the third ion species (Cl$^-$), since 
it vanishes almost immediately from the channel due to its negative charge. 
While absolute differences are relatively small, we can still observe that the 
electric potential in the finite-element case is always smaller compared to the
finite-volume solution, while the 
peaks of the concentration profiles are more distinctive for the finite-element 
than for the finite-volume solution.

\begin{figure}[htb]
\hspace*{-10mm}\includegraphics[width=180mm]{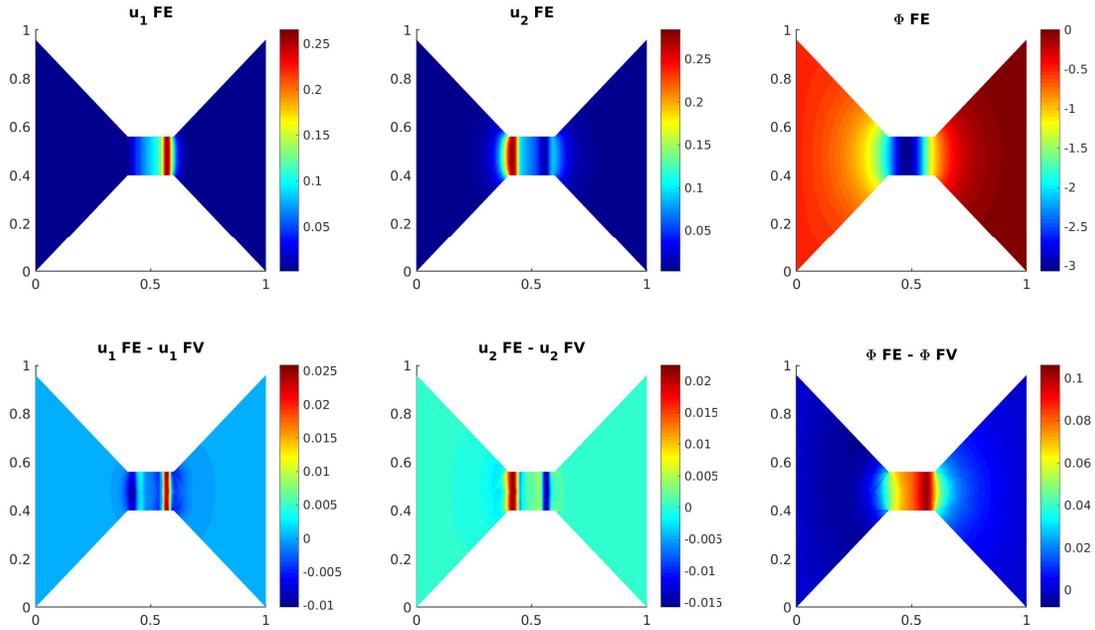}
\caption{Solution after 600 time steps computed from the finite-element scheme (top) 
and difference between the finite-volume (FV) and finite-element (FE) 
solutions (bottom).}
\label{fig.sol}
\end{figure}

\begin{figure}[htb]
\hspace*{-10mm}\includegraphics[width=180mm]{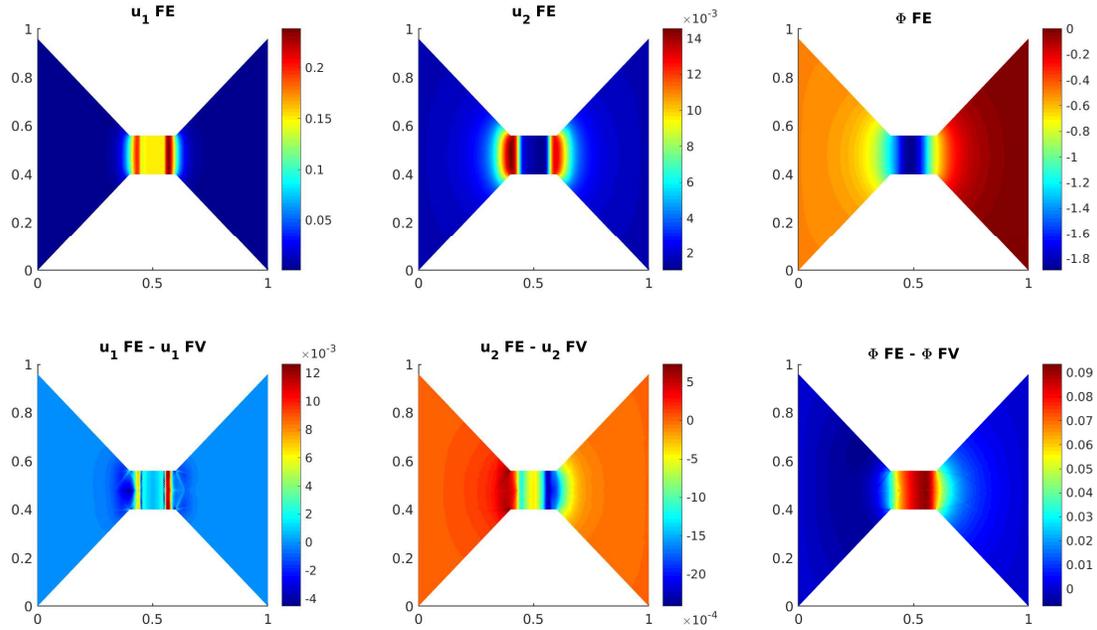}
\caption{Solution after 6000 time steps (close to equilibrium) computed from the 
finite-element scheme (top) and difference between the finite-volume (FV) and 
finite-element (FE) solutions (bottom).}
\label{fig.equi}
\end{figure}

In order to compare the two numerical methods, we test the convergence of the schemes 
with respect to the mesh diameter. Since an exact solution to our problem is not 
available, we compute a reference solution both with the finite-volume and the 
finite-element scheme on a very fine mesh with 18,944 elements and maximal cell diameter 
$h\approx 0.01$. The differences between these reference solutions in the discrete 
$L^1$ and $L^\infty$ norms are given in Table \ref{tab.ref} for the various unknowns. 
Since the finite-element and finite-volume solutions are found in different function 
spaces, one has to be careful how to compare them. The values in Table \ref{tab.ref} 
are obtained by projecting the finite-element solution onto the finite-volume space 
of functions that are constant on each cell in NGSolve, thereby introducing an 
additional error. However, the difference between the reference solutions is still 
reasonably small, especially when the simulations are already close to the 
equilibrium state. 

To avoid the interpolation error in the convergence plots, we compare the
approximate finite-element or finite-volume solutions on coarser nested meshes with 
the reference solutions computed with the corresponding method. In Figure 
\ref{fig.conv}, the errors in the discrete $L^1$ norm between the reference solution 
and the solutions on the coarser meshes at the two fixed time steps $k=600$ and $k=6000$ 
are plotted. For the finite-volume approximation, we clearly observe the expected 
first-order convergence in space, whereas for the finite-element method, the error 
decreases, again as expected, with $h^2$. These results serve as a validation for the 
theoretical convergence result proven for the finite-element scheme and show the 
efficiency of the finite-volume method even in the general case of ion transport, 
which is not covered by the convergence theorem in \cite{CCGJ18}.

\begin{table}
\begin{tabular}{l c c c c c}\hline\noalign{\smallskip}
 & $u_1$ & $u_2$ & $u_3$ & $u_0$ & $\Phi$ \\ \hline\noalign{\smallskip}
 $L^\infty$ norm, $k=600$ & 2.2405e-02 & 2.0052e-02 & 1.0319e-04 & 1.6695e-02 
 & 1.0600e-01 \\ 
 $L^1$ norm, $k=600$ & 2.2642e-04 & 3.0275e-04 & 1.3776e-05 & 2.5983e-04 & 5.1029e-03 \\ 
 $L^\infty$ norm, $k=6000$ & 1.0036e-02 & 2.3619e-03 & 1.3677e-04 & 9.1095e-03 
 & 9.5080e-02 \\  
 $L^1$ norm, $k=6000$ & 1.4161e-04 & 7.0981e-05 & 1.5498e-05 & 1.5615e-04 & 4.6543e-03
 \\ \hline\noalign{\smallskip}
\end{tabular}
\vspace{0.1cm}
\caption{Difference between the finite-volume and finite-element reference solutions 
after 600 and after 6000 time steps.}
\label{tab.ref}
\end{table}

\begin{figure}[htb]
\includegraphics[width=0.9\textwidth]{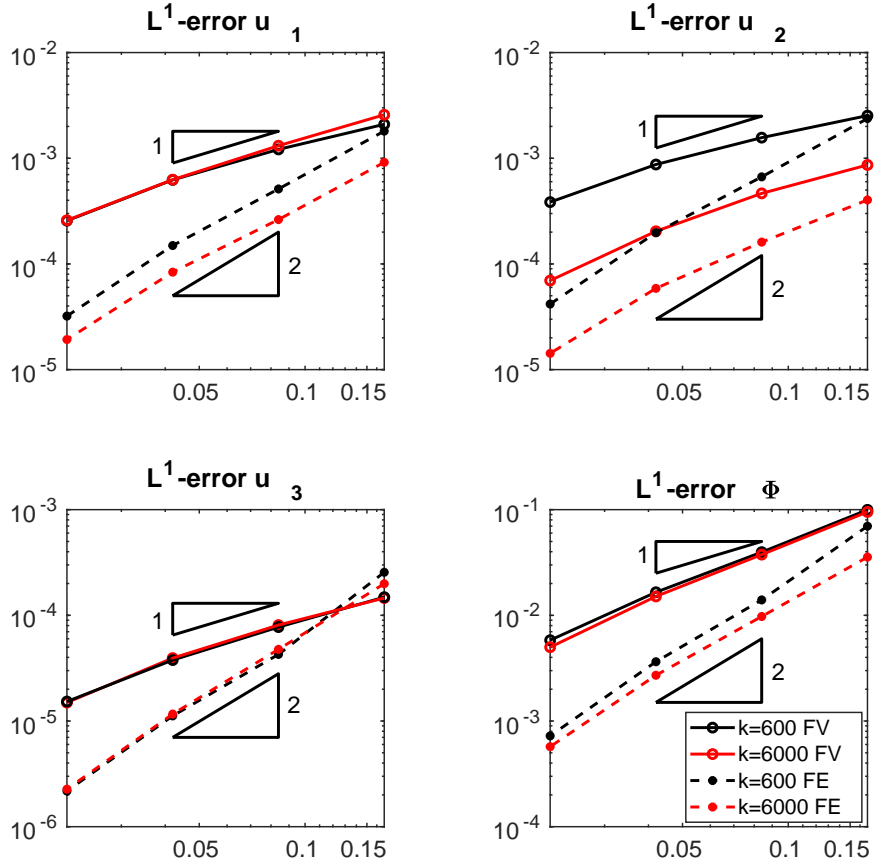}
\caption{$L^1$ error relative to the reference solution after 600 time steps (black) and 
6000 time steps (red) plotted over the mesh size $h$. Dashed lines are used for the 
finite-element solution, full lines for the finite-volume solution.}
\label{fig.conv}
\end{figure}

In Table \ref{tab.time}, the average time needed to compute one time step with the 
finite-element or finite-volume scheme for the five nested meshes is given. Clearly, 
the finite-volume scheme is much faster than the finite-element method. This is 
mostly due to the computationally expensive assembly of the finite-element matrices. 

\begin{table}
\begin{tabular}{l c c c c c}\hline\noalign{\smallskip}
 & $\T_1$ & $\T_2$ & $\T_3$ & $\T_4$ & $\T_5$ \\ \hline\noalign{\smallskip}
 FE & 2.4065e-01 & 7.9982e-01 & 2.1125e+00 & 4.9844e+00 & 17.7788e+00 \\ 
 FV & 6.7707e-03 & 2.2042e-02 & 3.0532e-01 & 1.7660e+00 & 2.2418e+00 \\  
\hline\noalign{\smallskip}
\end{tabular}
\vspace{0.1cm}
\caption{Average time needed to compute one time step (in seconds).
FE = finite-element scheme, FV = finite-volume scheme.}
\label{tab.time}
\end{table}

In addition to the convergence analysis, we also study the behavior of the discrete 
entropy for both schemes. We consider in both cases the entropy relative to the steady 
state $(u^\infty_{i},\Phi^\infty)$, which is computed from the corresponding 
discretizations of the stationary equations with the same parameters and boundary data. 
Figure \ref{fig.entropy} shows the relative entropy (see \cite[Section 6]{CCGJ18})
and the $L^1$ error compared to the equilibrium state for the finite-element 
and finite-volume solutions on different meshes. 
Whereas for the coarsest mesh the convergence rates differ notably, 
we can observe a similar behavior when the mesh is reasonably fine. 
In Figure \ref{fig.entropy_err}, 
we investigate the convergence of the relative entropy with respect to the mesh size. 
As before, we observe second-order convergence for the finite-element scheme and a
first-order rate for the finite-volume method. 

\begin{figure}[htb]
\includegraphics[width=\textwidth]{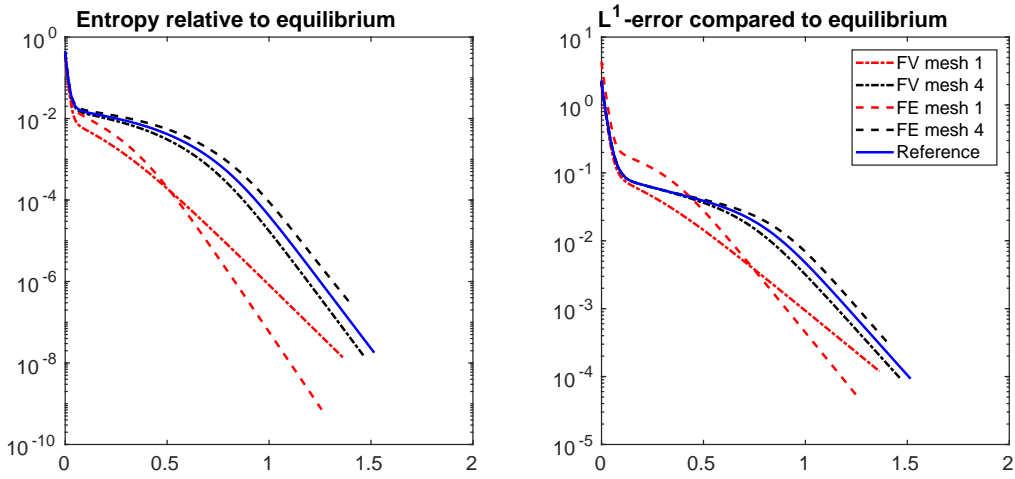}
\vspace{-1cm}
\caption{Relative entropy (left) and sum of $L^1$ differences of $u$ and $\Phi$ relative 
to the equilibrium state (right) over time for various meshes.
Mesh 1 has 74 triangles, mesh 4 has 18,944 elements.}
\label{fig.entropy}
\end{figure}

\begin{figure}[htb]
\includegraphics[width=0.9\textwidth]{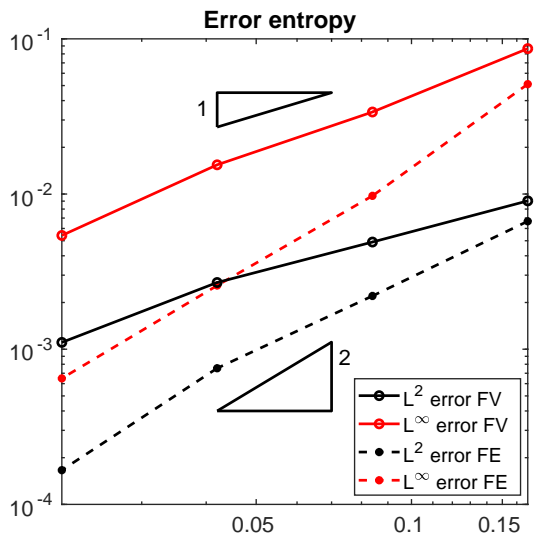}
\caption{Error for the relative entropy with respect to mesh size.}
\label{fig.entropy_err}
\end{figure}


\subsection{Test case 2: bipolar ion channel}

The second example models a pore with asymmetric charge distribution, which occurs 
naturally in biological ion channels but also in synthetic nanopores. Asymmetric pores 
typically rectify the ion current, meaning that the current measured 
for applied voltages 
of positive sign is higher than the current for the same absolute value of voltage 
with negative sign. The setup is similar to that of an N-P semiconductor diode. 
The N-region is characterized by the fixed positive charge. The anions are the 
counter-ions and thus the majority charge carriers, while the cations are the co-ions 
and minority charge carriers. In the P-region, the situation is exactly the other way 
around. In the on-state, the current is conducted by the majority carriers, while
in the off-state, the minority carriers are responsible for the current, which 
leads to the rectification behavior. 

Often, bipolar ion channels are modeled with asymmetric surface charge distributions 
on the channel walls. However, to fit these channels into the framework of our model, 
we follow the approach described in \cite{HBGVRK15}. Similar to the first test case, 
we assume that there are eight confined molecules inside the channel, but this time 
four molecules are positively charged ($+0.5e$) and the other four molecules 
are negatively charged ($-0.5e$). The simulation domain $\Omega\subset\R^2$ is depicted 
in Figure \ref{fig.omega_bip}. The shape of the domain and the parameters used for 
the simulations are taken from \cite{HBGVRK15} and are summarized in Table 
\ref{tab.parameters2}. The mesh (made up of 2080 triangles) was created with 
NGSolve/Netgen. We consider two mobile species of ions, one cation (Na$^+$, $u_1$) 
and one anion (Cl$^-$, $u_2$). The confined ions are modeled as eight fixed circles 
of radius $1.4$, where the concentration $c\equiv c_{\max}$ is such that the portion 
of the channel occupied by these ions is the same as in the simulations in 
\cite{HBGVRK15}. The solvent concentration then becomes $u_0=1-u_1-u_2-c$.

\begin{figure}[htb]
\includegraphics[width=0.5\textwidth]{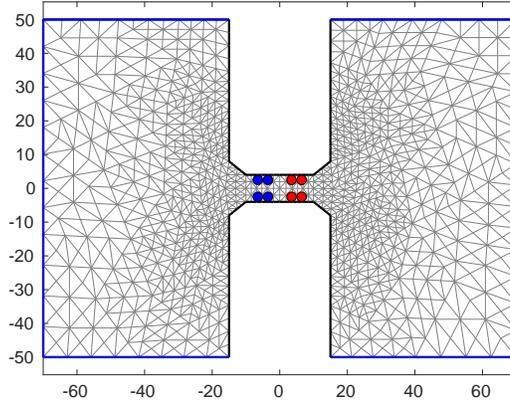}
\vspace{-1cm}
\caption{Simulation domain with triangulation for the bipolar ion channel. 
The blue circles represent positively charged confined ions, the red circles 
negatively charged ions. The black (blue) part of the boundary is equipped with 
Neumann (Dirichlet) boundary conditions.}
\label{fig.omega_bip}
\end{figure}

\begin{table}
\begin{tabular}{l l l }\hline\noalign{\smallskip}
Meaning & Value & Unit  \\ \hline\noalign{\smallskip}
Diffusion coefficients $D_1$, $D_2$ & 1 &   \\ 
Effective permittivity $\lambda^2$ & 1.1713 &  \\
Effective mobility $\beta$ & 3.8922 & \\
Bath concentrations $\overline{u_1}$, $\overline{u_2}$ & 0.0016 & \\
Confined ion concentration $c_{\max}$ \quad & 0.2971 & \\
Typical length $L_\text{typ}$ & 1e-10 & m \\
Typical concentration $u_\text{typ}$ & 3.7037e+28 & Nm$^{-3}$ \\
Typical voltage $\Phi_\text{typ}$ & 0.1 & V \\
Typical diffusion $D_\text{typ}$ & 1.3340e-9 & m$^2$s$^{-1}$ \\
  \hline\noalign{\smallskip}
\end{tabular}
\vspace{0.1cm}
\caption{Dimensionless parameters used for the simulation of the bipolar ion channel 
and values used for the scaling.}
\label{tab.parameters2}
\end{table}

By changing the boundary value $\overline{\Phi}_\text{right}$ for the potential 
$\Phi$ on the right part of the Dirichlet boundary (on the left side, it is fixed 
to zero), we can apply an electric field in forward bias (on-state, 
$\overline{\Phi}_\text{right}=1$) or reverse bias (off-state, 
$\overline{\Phi}_\text{right}=-1$). Figures \ref{fig.equilibrium_on} and 
\ref{fig.equilibrium_off} show the stationary state computed with the finite-element
method in the on- and off-state, respectively. 
Evidently, the ion concentrations in 
the on-state are much higher than in the off-state. In comparison with the results 
from \cite{HBGVRK15}, where the Poisson--Nernst--Planck equations with linear
diffusion (referred to as the linear PNP model) were combined with Local 
Equilibrium Monte--Carlo simulations, we find that with the 
Poisson--Nernst--Planck equations with cross-diffusion (referred to as the
nonlinear PNP model), the charged ions in the channel attract an amount of ions 
higher than the bath concentrations even in the off-state.

\begin{figure}[htb]
\includegraphics[width=\textwidth]{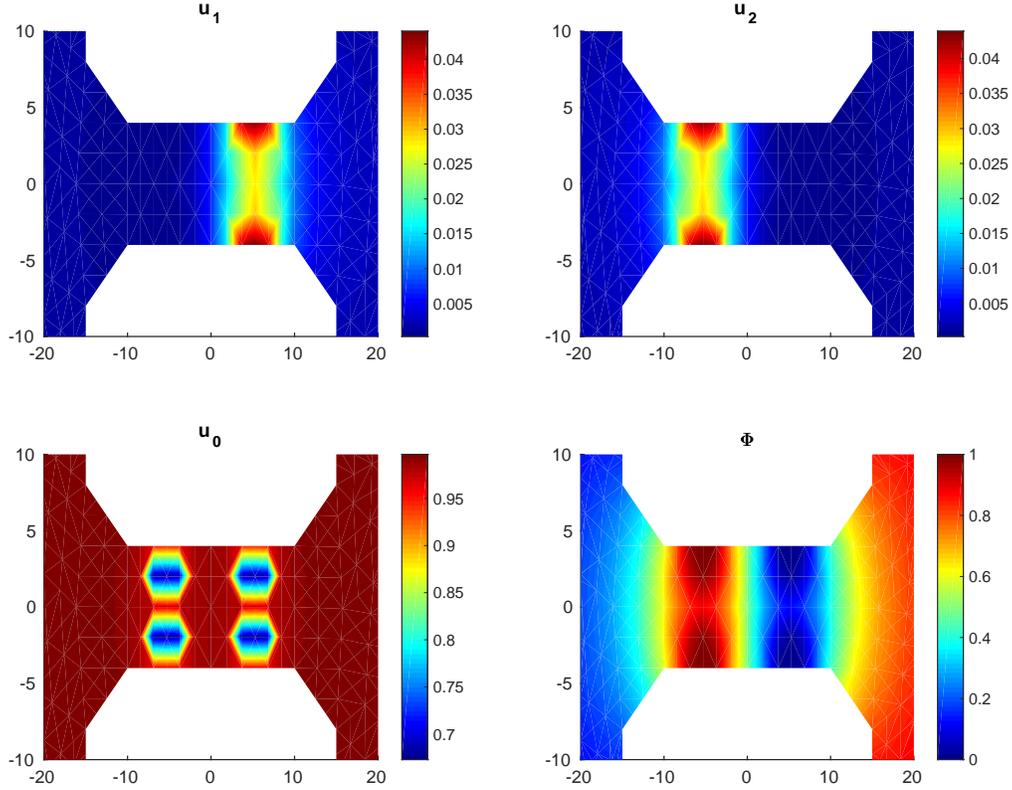}
\vspace{-1cm}
\caption{Stationary solution in the on-state (channel region).}
\label{fig.equilibrium_on}
\end{figure}

\begin{figure}[htb]
\includegraphics[width=\textwidth]{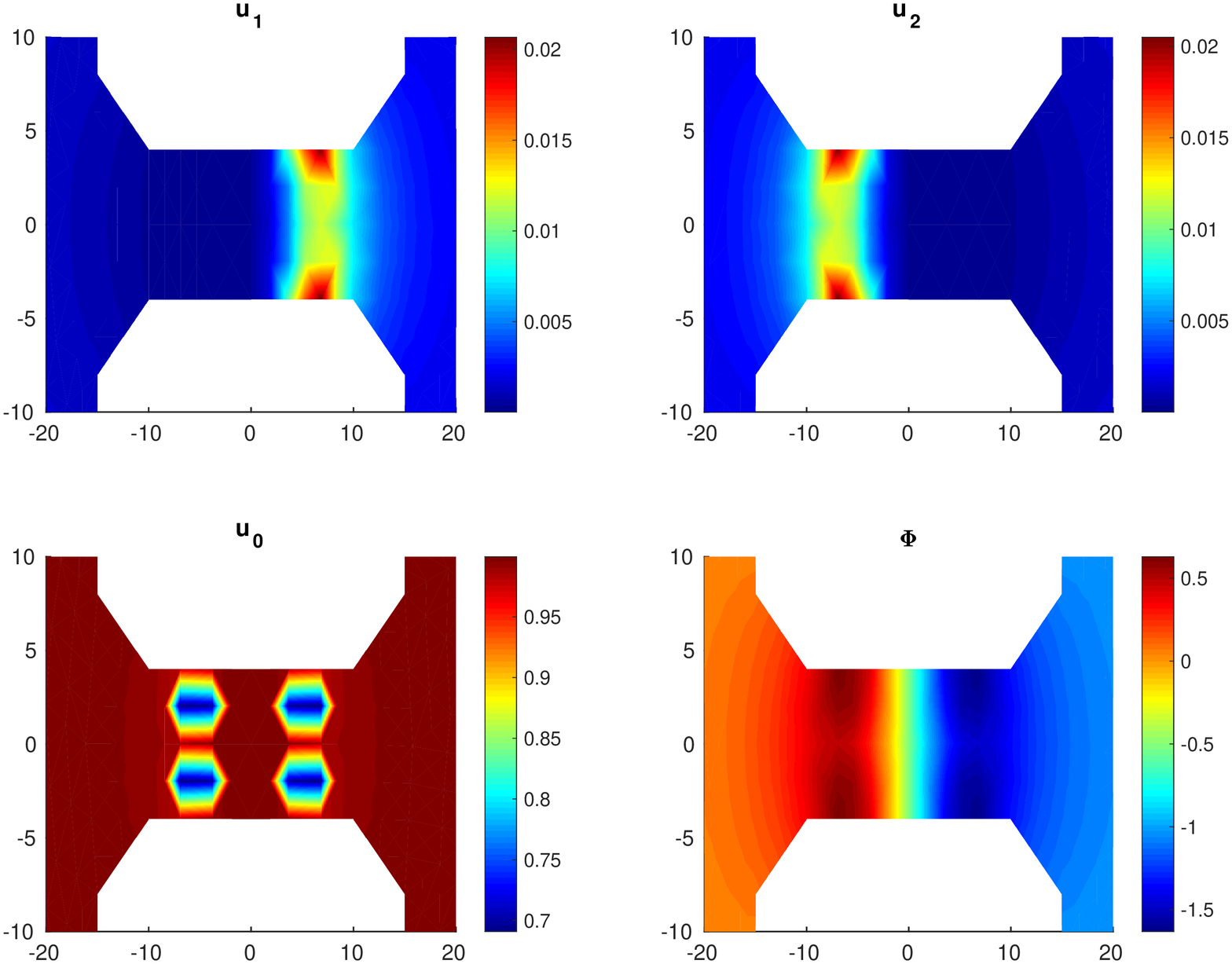}
\vspace{-1cm}
\caption{Stationary solution in the off-state (channel region).}
\label{fig.equilibrium_off}
\end{figure}

From a modeling point of view, it is an important question whether the nonlinear 
PNP model reproduces the rectification mechanism described above. 
For this purpose, we need to calculate the electric current $I$ flowing through the 
pore, given by
\begin{equation}\label{eq:current}
  I = -\sum_{i}z_i\int_{A}\F_i\cdot\nu ds,
\end{equation}
where $A$ is the cross-section of the pore and $\nu$ the unit normal to $A$. 
In the finite-element setting, we can use the representation of the fluxes in 
entropy variables, $\F_i = D_i u_i(w,\Phi)u_0(w,\Phi)\nabla w_i$ and compute 
the integrals in \eqref{eq:current} using a quadrature formula along the line $x=10$. 

\begin{figure}[htb]
\includegraphics[width=\textwidth]{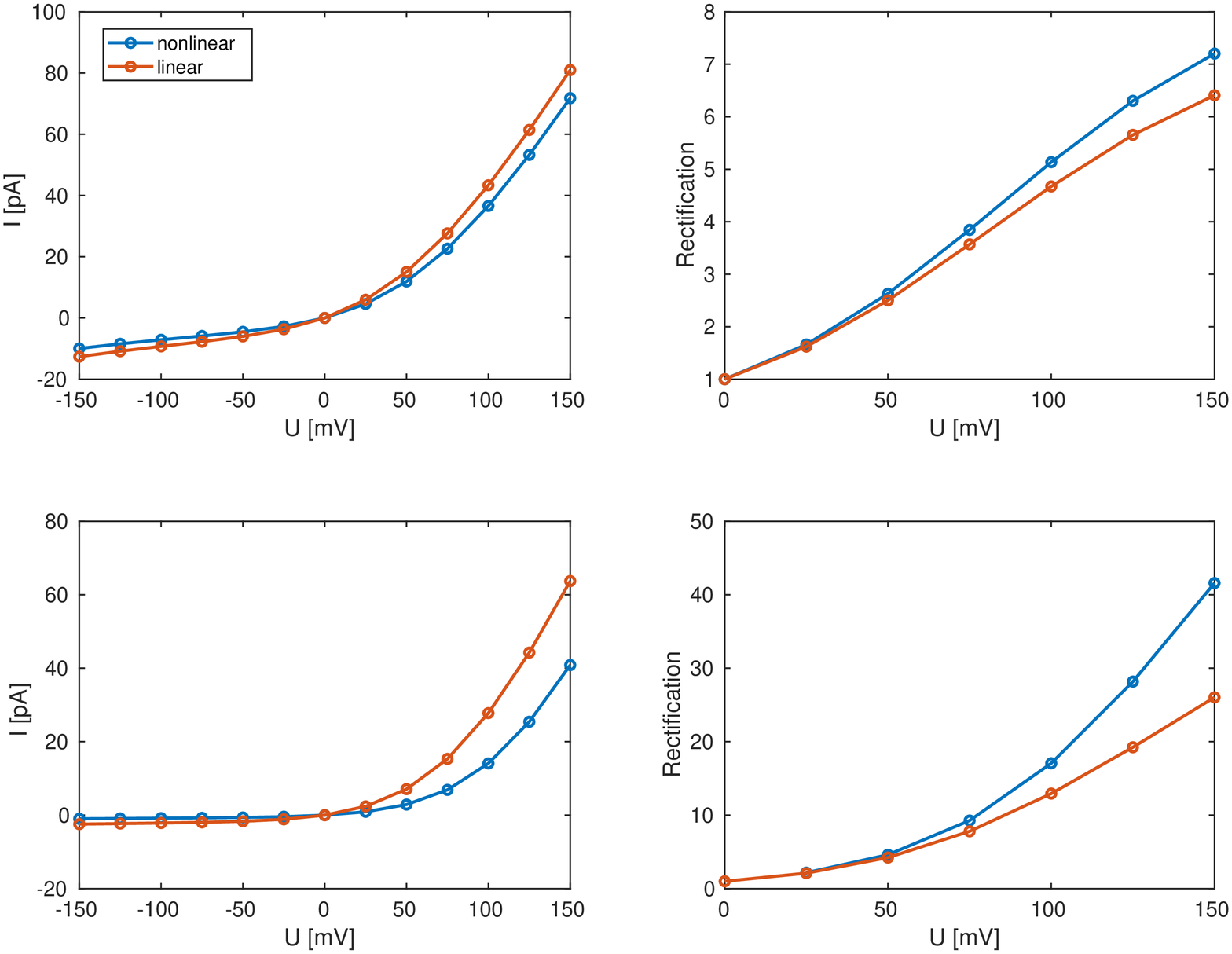}
\vspace{-1cm}
\caption{Current-voltage curves and rectification. First row: The parameters are 
as in Table \ref{tab.parameters2}; second row: with $c_{\max}=0.7$.}
\label{fig.rectification}
\end{figure}

Figure \ref{fig.rectification} shows the current-voltage curves obtained with the 
finite-element solutions. In addition, the rectification is 
depicted, which is calculated for voltages $U\ge 0$ according to
\begin{equation*}
  r(U) = \bigg| \frac{I(U)}{I(-U)} \bigg|.
\end{equation*}
We also compute the current-voltage curve for the linear PNP model, 
which is obtained from the model equations by setting $u_0\equiv 1$, 
such that
\begin{equation*}
  \pa_t u_i = \diver \big( D_i\na u_i + D_i\beta z_iu_i\na \Phi \big).
\end{equation*}
We expect from the simulations done in \cite{BSW12} for the calcium channel that 
the current of the nonlinear PNP model is lower than that one from the linear 
PNP model. This expectation is confirmed also in this case.  
As Figure \ref{fig.rectification} shows, the rectification is stronger in 
the nonlinear PNP model. The difference between the two models is even more pronounced 
when we increase the concentration of the confined ions to $c_{\max}=0.7$. In 
that case, the channel gets more crowded and size exclusion has a bigger effect. 
We observe a significantly lower current and higher rectification for the 
nonlinear PNP model. 


\section{Conclusions}\label{sec.Con}

In this work, we have presented a finite-element discretization of a cross-diffusion
Poisson--Nernst--Planck system and recalled a finite-volume scheme that was 
previously proposed and analyzed \cite{CCGJ18}. In the following, we summarize the 
differences between both approaches from a theoretical viewpoint and our findings 
from the numerical experiments.

\begin{itemize}
\item \textbf{Structure of the scheme:} The finite-element scheme strongly relies 
on the entropy structure of the system and is formulated in the entropy variables.
From a thermodynamic viewpoint, the entropy variables are related to the chemical
potentials, which gives a clear connection to nonequilibrium thermodynamics.
On the other hand, the finite-volume scheme exploits the drift-diffusion structure 
that the system displays in the original variables.
\item \textbf{$L^\infty$ bounds:} Due to the formulation in entropy variables, 
the $L^\infty$ bounds for the finite-element solutions follow immediately from 
\eqref{1.uinv} without the use of a maximum principle. In other words, the
lower and upper bounds are inherent in the entropy formulation.
In the case of the finite-volume scheme, we can apply a discrete maximum principle,
but only under the (restrictive) assumption that the diffusion coefficients $D_i$ are
the same. 
\item \textbf{Convergence analysis:} The entropy structure used in the finite-element 
scheme allows us to use the same mathematical techniques for the convergence proof
as for the continuous system, but a regularizing term has to be added to ensure the 
existence of discrete solutions. 
The convergence of the finite-volume solution requires more 
restrictive assumptions: In addition to the equal diffusion constants necessary for 
proving the existence and $L^\infty$ bounds, we can only obtain the entropy inequality 
and gradient estimates for vanishing potentials.
\item \textbf{Initial data:} Since the initial concentrations have to be transformed 
to entropy variables via \eqref{1.w_i}, the finite-element scheme can only be applied 
for initial data strictly greater than zero. The finite-volume scheme, on the other 
hand, can handle exactly vanishing initial concentrations.
\item \textbf{Experimental convergence rate:} In the numerical experiments, both 
schemes exhibit the expected order of convergence with respect to mesh size 
(even if we cannot prove any rates analytically): first-order convergence for the 
finite-volume scheme and second-order convergence for the finite-element scheme.
\item \textbf{Performance:} The numerical experiments done for this work suggest that 
the finite-element algorithm needs smaller time steps for the Newton iterations to 
converge than for the finite-volume scheme, especially when the solvent concentration
is close to zero. Furthermore, the assembly of the finite-element matrices is 
computationally quite expensive resulting in longer running times compared to the
finite-volume scheme.
\item \textbf{Mesh requirements:} A finite-volume mesh needs to satisfy the 
admissibility condition. This might be a disadvantage for simulations in 
three space dimensions.
\end{itemize}



\begin{thebibliography}{11}
\bibitem{BB04} J.~Barrett and J.~Blowey. Finite element approximation of a nonlinear 
cross-diffusion population model. {\em Numer. Math.} 98 (2004), 195-221.

\bibitem{BrCh14} M.~Bruna and J.~Chapman. Diffusion of finite-size particles in 
confined geometries. {\em Bull. Math. Biol.} 76 (2014), 947-982.

\bibitem{BDPS10} M.~Burger, M.~Di Francesco, J.-F.~Pietschmann, and B.~Schlake.
Nonlinear cross-diffusion with size exclusion. {\em SIAM J. Math. Anal.} 42 (2010),
2842-2871.

\bibitem{BSW12} M.~Burger, B.~Schlake, and M.-T.~Wolfram. Nonlinear
Poisson--Nernst--Planck equations for ion flux through confined geometries.
{\em Nonlinearity} 25 (2012), 961-990.

\bibitem{CCGJ18} C.~Canc\`es, C.~Chainais-Hillairet, A.~Gerstenmayer, and A.~J\"ungel. 
Convergence of a finite-volume scheme for a degenerate cross-diffusion model for ion 
transport. To appear in {\em Num. Meth. Partial Diff. eqs.}, 2018. arXiv:1801.09408.

\bibitem{CSBB01} E.~Carafoli, L.~Santella, D.~Branca, and M.~Brini. Generation, control, 
and processing of cellular calcium signals. {\em Crit. Rev. Biochem. Mol. Biol.} 
36 (2001), 107-260.

\bibitem{Cia02} P.~G.~Ciarlet. {\em The Finite Element Method for Elliptic Problems}. 
Studies in Mathematics and its Applications, Vol. 4. North-Holland, Amsterdam, 1978.


\bibitem{DeMa84} S.~de Groot and P.~Mazur. {\em Non-Equilibrium Thermodynamics},
Dover, 1984.

\bibitem{DGM13} W.~Dreyer, C.~Guhlke, and R.~M\"uller. Overcoming the shortcomings of 
the Nernst--Planck--Poisson model. {\em Phys. Chem. Chem. Phys.} 15 (2013), 7075-7086.

\bibitem{Egg18} H.~Egger. Structure preserving approximation of dissipative evolution 
problems. Submitted for publication, 2018. arXiv:1804.08648.

\bibitem{Eis98} R.~S.~Eisenberg. Ionic channels in biological membranes: electrostatic 
analysis of a natural nano-tube. {\em Contemp. Phys.} 39 (1998), 447-466.

\bibitem{EGH00} R.~Eymard, T.~Gallou{\"e}t, and R.~Herbin. Finite volume methods. 
In: P.~G.~Ciarlet and J.-L.~Lions (eds.), 
{\em Handbook of Numerical Analysis} 7 (2000), 713-1018.

\bibitem{FMSV17} M.~Frittelli, A.~Madzvamuse, I.~Sgura, and C.~Venkataraman. Lumped 
finite elements for reaction-cross-diffusion systems on stationary surfaces. 
{\em Computers Math. Appl.} 74 (2017), 3008-3023.

\bibitem{GS14} G.~Galiano and V.~Selgas. On a cross-diffusion segregation problem 
arising from a model of interacting particles. {\em Nonlin. Anal.: Real World Appl.} 
18 (2014), 34-49.

\bibitem{GeJu18} A.~Gerstenmayer and A.~J\"ungel. Analysis of a degenerate parabolic 
cross-diffusion system for ion transport. {\em J. Math. Anal. Appl.} 461 (2018), 523-543.

\bibitem{GXWM05} D.~Gillespie, L.~Xu, Y.~Wang, and G.~Meissner. (De)constructing the 
ryanodine receptor: Modeling ion permeation and selectivity of the calcium release 
channel. {\em J. Phys. Chem. B} 109 (2005), 15598-15610.

\bibitem{HBGVRK15} Z.~Hat{\'o}, D.~Boda, D.~Gillespie, J.~Vrabec, G.~Rutkai, and 
T.~Krist{\'o}f. Simulation study of a rectifying bipolar ion channel: Detailed 
model versus reduced model. {\em Condens. Matter Phys.} 19 (2016), 13802, 16 pages.

\bibitem{HoHu52} A.~Hodgkin and A.~Huxley. A quantitative description of membrane 
current and its application to conduction and excitation in nerve. 
{\em J. Physiol.} 117 (1952), 500-544.

\bibitem{HHLLL15} C.-Y.~Hsieh, Y.-K.~Hyon, H.~Lee, T.-C.~Lin, and C.~Liu.
Transport of charged particles: Entropy production and Maximum Dissipation
Principle. {\em J. Math. Anal. Appl.} 422 (2015), 309-336.

\bibitem{ISR00} W.~Im, S.~Seefeld, and B.~Roux. A grand canonical Monte Carlo-Brownian 
dynamics algorithm for simulating ion channels. {\em Biophys. J.} 79 (2000), 788-801.

\bibitem{Jue15} A.~J\"ungel. The boundedness-by-entropy method for cross-diffusion
systems. {\em Nonlinearity} 28 (2015), 1963-2001.

\bibitem{Jue16} A.~J\"ungel. {\em Entropy Methods for Diffusive Partial Differential
Equations}. BCAM Springer Briefs, Springer, 2016.

\bibitem{JuLe18} A.~J\"ungel and O.~Leingang. Convergence of an implicit Euler Galerkin 
scheme for Poisson--Maxwell--Stefan systems. Submitted for publication, 2018.
arXiv:1809.00413.

\bibitem{JuSt13} A.~J\"ungel and I.~V.~Stelzer. Existence analysis of Maxwell--Stefan 
systems for multicomponent mixtures. {\em SIAM J. Math. Anal.} 45 (2013), 2421-2440.

\bibitem{LHMZ10} B.~Lu, M.~Holst, J.~McCammon, and Y.~C.~Zhou. Poisson--Nernst--Planck 
equations for simulating biomolecular diffusion-reaction processes I: Finite element 
solutions. {\em J. Comput. Phys.} 229 (2010), 6979-6994.

\bibitem{NSS05} B.~Nadler, Z.~Schuss, and A.~Singer. Langevin trajectories between 
fixed concentrations. {\em Phys. Rev. Lett.} 94 (2005), 218101, 5 pages.

\bibitem{NCE00} W.~Nonner, L.~Catacuzzeno, and R.~S.~Eisenberg. Binding and selectivity 
in L-type calcium channels: a mean spherical approximation.
{\em Biophys. J.} 79 (2000), 1976-1992.

\bibitem{NGHE01} W.~Nonner, D.~Gillespie, D.~Henderson, and B.~Eisenberg. Ion 
accumulation in a biological calcium channel: effects of solvent and confining 
pressure. {\em J. Phys. Chem. B} 105 (2001), 6427-6436.

\bibitem{PrSc09} A.~Prohl and M.~Schmuck. Convergent discretizations for the 
Nernst--Planck--Poisson system. {\em Numer. Math.} 111 (2009), 591-630.

\bibitem{Sch14} J.~Sch\"oberl. C++11 Implementation of Finite Elements in NGSolve. 
Preprint, 2014. Available at
http://www.asc.tuwien.ac.at/$\sim$schoeberl/wiki/publications/ngs-cpp11.pdf.

\bibitem{Sch97} J.~Sch\"oberl. NETGEN -- An advancing front 2D/3D-mesh generator based 
on abstract rules. {\em Comput. Visual. Sci.} 1 (1997), 41-52.

\bibitem{SLH09} M.~Simpson, K.~Landman, and B.~Hughes. Multi-species simple exclusion 
processes. {\em Phys. A} 388 (2009), 399-406.

\bibitem{ZaJu17} N.~Zamponi and A.~J\"ungel. Analysis of degenerate cross-diffusion 
population models with volume filling. {\em Ann. I. H. Poincar\'e -- Anal. Non Lin.}
34 (2017), 1-29. (Erratum: 34 (2017), 789-792.) 
\end{thebibliography}
\end{document}